\newtheorem{theorem}{Theorem}[section]
\newtheorem{lemma}[theorem]{Lemma}
\newtheorem{corollary}[theorem]{Corollary}
\newtheorem{proposition}[theorem]{Proposition}
\newtheorem{remark}[theorem]{Remark}
\theoremstyle{definition}
\newlength{\margins}
\numberwithin{equation}{section}
\def\max{\operatorname{max}}
\def\Ker{\operatorname{Ker}}
\def\Ad{\operatorname{ Ad}}
\def\Aut{\operatorname{Aut}}
\def\Inn{\operatorname{Inn}}
\def\der{\operatorname{{der}}}
\def\vol{\operatorname{ vol }}
\def\Gal{\mathop{\rm Gal}\nolimits}
\def\GL{\mathop{\rm GL}\nolimits}
\def\PGL{\mathop{\rm PGL}\nolimits}
\def\ad{{\mathop{\rm ad}\nolimits}}
\def\Nm{{\mathop{\rm Nm}\nolimits}}
\def\pr{{\mathop{\rm pr}\nolimits}}
\def\sc{{\mathop{\rm sc}\nolimits}}
\def\der{{\mathop{\rm der}\nolimits}}
\def \Del{\mathop{\rm Del}\nolimits}
\def \Tr{\mathop{\rm Tr}\nolimits}
\def \Kaz{\mathop{\rm Kaz}\nolimits}
\newcommand{\Res}{\text{Res}}
\newcommand{\Fu}{{\breve F}}
\numberwithin{equation}{section}
\numberwithin{equation}{section}
\newcommand{\cA}{{\mathcal{A}}}
\newcommand{\cB}{{\mathcal{B}}}
\newcommand{\cC}{{\mathcal{C}}}
\newcommand{\cF}{{\mathcal{F}}}
\newcommand{\cH}{{\mathcal{H}}}
\newcommand{\cK}{{\mathcal{K}}}
\newcommand{\cL}{{\mathcal{L}}}
\newcommand{\cM}{{\mathcal{M}}}
\newcommand{\cP}{\mathcal{P}}
\newcommand{\cT}{{\mathcal{T}}}
\newcommand{\bbC}{{\mathbb{C}}}
\newcommand{\bbG}{{\mathbb{G}}}
\newcommand{\bbR}{{\mathbb{R}}}
\newcommand{\bbZ}{{\mathbb{Z}}}
\newcommand{\tc}{\widetilde{c}}
\newcommand{\tp}{\widetilde{p}}
\newcommand{\tF}{{\widetilde{F}}}
\newcommand{\tL}{{\widetilde{L}}}
\newcommand{\fO}{\mathfrak{O}}
\newcommand{\fe}{\mathfrak{e}}
\newcommand{\fp}{\mathfrak{p}}
\newcommand{\brM}{{\breve{M}}}
\newcommand{\btau}{{\breve{\tau}}}
\newcommand{\bl}{{\breve{\lambda}}}
\newcommand{\bs}{{\breve{s}}}
\newcommand{\ba}{{\breve{a}}}
\newcommand{\by}{{\breve{y}}}
\newcommand{\be}{{\breve{\eta}}}
\newcommand{\bb}{{\breve{b}}}
\newcommand{\bcC}{{\breve{\cC}}}
\newcommand{\bcF}{{\breve{\cF}}}
\newcommand{\bc}{{\breve{c}}}
\newcommand{\bz}{{\breve{z}}}
\newcommand{\bmu}{{\breve{\mu}}}
\newcommand{\bP}{{\breve{P}}}
\newcommand{\bL}{{\breve{L}}}
\newcommand{\bF}{{\breve{F}}}
\newcommand{\bnu}{{\breve{\nu}}}
\newcommand{\bea}{{\be_{\ad}}}
\newcommand{\blambda}{{\breve{\lambda}}}
\newcommand{\red}{{\rm{red}}}
\begin{document}
\title{A Hecke algebra isomorphism over close local fields}
\author{Radhika Ganapathy}

\address{Department of Mathematics, Indian Institute of Science, Bengaluru.}
\email{radhikag@iisc.ac.in}
\subjclass[2000]{11F70, 22E50}
\begin{abstract} Let $G$ be a split connected reductive group over $\bbZ$. Let $F$ be a non-archimedean local field. With $K_m: = \Ker(G(\fO_F) \rightarrow G(\fO_F/\fp_F^m))$, Kazhdan proved that for a field $F'$ sufficiently close local field to $F$, the Hecke algebras $\cH(G(F),K_m)$ and $\cH(G(F'),K_m')$ are isomorphic, where $K_m'$ denotes the corresponding object over $F'$. In this article, we generalize this result to general connected reductive groups.
\end{abstract}
\maketitle

\section*{Introduction}

The goal of this article is to generalize Kazhdan's theory of studying representation theory of split reductive groups over close local fields to general connected reductive groups (see Theorem \ref{MainTheorem}). Let us briefly recall the Deligne-Kazhdan correspondence:\\
(a)  Given a local field $F'$ of characteristic $p$ and an integer $m \geq 1$, there exists a local field $F$ of characteristic 0 such that $F'$ is $m$-close to $F$, i.e., $\fO_F/\fp_F^m \cong \fO_{F'}/\fp_{F'}^m$.    \\
(b) In \cite{Del84},  Deligne proved that if  the fields $F$ and $F'$ are $m$-close, then
\[ \Gal(F_s/F)/I_F^m \cong \Gal(F_s'/F')/I_{F'}^m, \] where $I_F$ is the inertia subgroup and $I_F^m$ denotes the $m$-th higher ramification subgroup of $I_F$ with upper numbering. 
This gives a bijection
\begin{align*}
& \text{\{Cont., complex, f.d. representations of $\Gal(F_s/F)$ trivial on $I_F^m$\}}\\
& \longleftrightarrow  \text{\{Cont., complex, f.d. representations of $\Gal(F_s'/F')$ trivial on $I_{F'}^m$\}}. 
\end{align*}
Moreover, all of the above holds when $\Gal(F_s/F)$ is replaced by $W_F$, the Weil group of $F$.\\
(c)  Let $G$ be a split, connected reductive group defined over $\bbZ$. For an object $X$ associated to the field $F$, we will use the notation $X'$ to denote the corresponding object over $F'$.  In \cite{kaz86}, Kazhdan proved that  given $m \geq 1$, there exists $l \geq m$ such that if $F$ and $F'$ are $l$-close, then there is an isomorphism of Hecke algebras $\Kaz_m:\cH(G(F), K_m) \rightarrow \cH(G(F'), K_m')$, where $K_m $ is the $m$-th usual congruence subgroup of $G(\fO_F)$. 
Hence, when the fields $F$ and $F'$ are sufficiently close, we have a bijection
 \begin{align*}
 &\text{\{Irreducible  admissible $\bbC$-representations $(\pi, V)$ of $G(F)$ such that $\pi^{K_m} \neq 0$\}} \\
  &\longleftrightarrow\text{\{Irreducible admissible $\bbC$-representations  $(\pi', V')$ of $G(F')$ such that $\pi'^{K_m'} \neq 0$\}}. 
\end{align*}
These results suggest that, if one understands the representation theory of $\Gal(F_s/F)$ for all  local fields $F$ of characteristic 0, then one can use it to understand the representation theory of $\Gal(F_s'/F')$ for a local field $F'$ of characteristic $p$, and similarly, with an understanding of the representation theory of ${G}(F)$ for all local fields $F$ of characteristic 0, one can study the representation theory of ${G}(F')$, for $F'$ of
characteristic $p$. This philosophy has proved helpful in studying the local Langlands correspondence for split reductive groups in characteristic $p$ with an understanding of the local Langlands correspondence of such groups in characteristic 0 (see \cite{Bad02, Lem01,  Gan15, ABPS14, GV17}).

There are three crucial ingredients that go into the proof of the Kazhdan isomorphism for split reductive groups. 
\begin{enumerate}[(1)]
\item The Hecke algebra $\cH(G(F), K_m)$ is finitely presented.
\item The group $G(F)$ admits a Cartan decomposition, that is 
\begin{align}\label{Cartan0}
G(\fO_F) \backslash G(F) /G(\fO_F) = W(G,T) \backslash X_*(T)
\end{align}
where $T$ is a maximal $\bbZ$-split torus in $G$, $X_*(T)$ its cocharacter lattice and $W(G,T)$ the Weyl group of $T$ in $G$.
\item We have obvious isomorphisms\begin{align}\label{Kmiso0}
G(\fO_F)/K_m \cong G(\fO_F/\fp_F^m) \cong G(\fO_{F'}/\fp_{F'}^m)\cong G(\fO_{F'})/K_m'. 
\end{align}
if the fields $F$ and $F'$ are $m$-close.
\end{enumerate}
Using these, Kazhdan establishes the isomorphism of Hecke algebras as follows. First, since the torus $T$ is split,  the map $X_*(T) \rightarrow T(F), \; \lambda \rightarrow \lambda(\varpi_F)$ gives a (group-theoretic) section of the natural homomorphism $T(F) \rightarrow X_*(T)$, for any choice of uniformizer $\varpi_F$ of $F$. Hence $T(F) \cong T(\fO_F) \times X_*(T)$ as groups. Let $T_m = \Ker(T(\fO_F) \rightarrow T(\fO_F/\fp_F^m))$. For any field $F'$ that is $m$-close to $F$ and a choice of uniformizer $\varpi_{F'}$ of $F'$,  we get a section of $X_*(T) \rightarrow T(F')$ and a group isomorphism $T(F)/T_m \cong T(F')/T_m'$ such that, for each $\lambda \in X_*(T)$, $\lambda(\varpi_F) \mod T_m \rightarrow \lambda(\varpi_{F'}) \mod T_m'$ under this isomorphism. Next, the isomorphism in \eqref{Kmiso0} combined with the section of $T(F)$ (resp. $T(F')$) constructed above gives a nice set of representatives of the double cosets in $K_m\backslash G(F) /K_m$ (resp. $K_m'\backslash G(F') /K_m'$) such that the induced map $\cH(G(F), K_m) \rightarrow \cH(G(F'), K_m')$ is an isomorphism of $\bbC$-vector spaces. Kazhdan then uses (1) above to choose an $l>>m$ and shows that if the fields $F$ and $F'$ are $l$-close, this isomorphism of $\bbC$-vector spaces at level $m$ is in fact an algebra isomorphism. 

We now state what is known about these ingredients for general connected reductive groups:
\begin{enumerate}[(1')]
\item (1) is true for the Hecke algebra $\cH(G^*(F),P^*)$ where $G^*$ is a connected reductive group over $F$ and $P^*$ is a compact open subgroup of $G^*(F)$ by  \cite[Theorem 2.13 and Corollary 3.4]{Ber84}.
\item For a connected reductive group $G^*$ over $F$ and  a special maximal parahoric subgroup $K^*$ of $G^*(F)$, the Cartan decomposition analogous to \eqref{Cartan0} is known by the work of Haines - Rostami (see  \cite{HR10}). More precisely, they show that $K^*\backslash G^*(F) /K^* = W(G^*, A^*) \backslash \Omega_{M^*}$, where $A^*$ is a maximal $F$-split torus in $G^*$, $M^* = C_{G^*}(A^*)$ is a minimal Levi subgroup of $G^*$ and $\Omega_{M^*}$ is the Iwahori-Weyl group of $M^*$. 
\item We note that (3) is not obvious when the group is not split, and the analogue of \eqref{Kmiso0} has been established in \cite{Gan18}, when $G^*$ is a connected reductive group over $F$, $P^*$ is a parahoric subgroup of $G^*(F)$,  and $P_m^*$ is the $m$-th Moy-Prasad filtration subgroup of $P^*$. 
\end{enumerate}
With these ingredients in place for general $G^*$, we establish the Kazhdan isomorphism for the Hecke algebra $\cH(G^*(F), K_m^*)$ where $G^*$ is a connected reductive group over $F$, $K^*$ is a special maximal parahoric subgroup of $G^*(F)$ and $K_m^* = \Ker(\cK^*(\fO_F) \rightarrow \cK^*(\fO_F/\fp_F^m)$ where $\cK^*$ is the underlying smooth affine $\fO_F$-group scheme of $K^*$ constructed by Bruhat-Tits. 

The key difficulty that remains in carrying out the strategy of Kazhdan is as follows. For general $G^*$, in view of (2'), we need to choose a nice set of representatives $\{n_{\tau^*}\in M^*(F)\; |\;\tau^* \in \Omega_{M^*}\}$. Further, we need to show that when $F$ and $F'$ are $m$-close there is a group isomorphism  $M^*(F)/M_m^* \cong M'^*(F')/M_m'^*$ such that $n_{\tau^*} \mod M_m^* \rightarrow n_{\tau'^*} \mod M_m'^*$ under this isomorphism.  When $G^*$ is quasi-split, $M^*$ is a torus. If $G^*$ is not quasi-split, $M^*$ is not even commutative; in fact $M^*$ is an inner form of a connected reductive group of type $A$. This leads us to the question of understanding sections of the Kottwitz homomorphism $\kappa_{M^*, F}:M^*(F) \rightarrow \Omega_{M^*}$ (Note that when $G^*$ is split, $M^* =T$ is just a maximal split torus in $G^*$ and the Kottwitz homomorphism is just the natural homomorphism  $T(F) \rightarrow X_*(T)$). 

In Section \ref{KottwitzTorus}, we consider a general torus $T$ over $F$ and construct a group-theoretic section of the Kottwitz homomorphism $\kappa_{T,F}: T(F) \rightarrow \Omega_{T}$ (see Lemma \ref{tlsstable}). We then combine this with the work of Chai-Yu (see \cite{CY01}) and prove in Lemma \ref{NMTCLF} that $T(F)/T_m \cong T'(F')/T_m'$ as groups  provided the fields $F$ and $F'$ are sufficiently close. Next, note that with $M^*$ as in (2'), its adjoint group is anisotropic over $F$. In Section \ref{KottwitzM}, we construct a nice set-theoretic section $\tp$ of the Kottwitz homomorphism $\kappa_{M^*, F}: M^*(F) \rightarrow \Omega_{M^*}$, which is a group theoretic section if $M^*$ is itself adjoint; see Proposition \ref{GTS}.  To do this we exploit the fact that $M^*$ is of type $A$. Let $M^*(F)_1$ be the kernel of $\kappa_{M^*, F}$. The results of \cite{Gan18} yield that $M^*(F)_1/M_m^* \cong M'^*(F')_1/M_m'^*$ provided $F$ and $F'$ are sufficiently close. We prove that $\Omega_{M^*} \cong \Omega_{M'^*}$ and identify these groups via these isomorphisms. Note that unlike the case of tori, the Kottwitz homomorphism $\kappa_{M^*, F}$ need not admit a group-theoretic section for general $M^*$, so the exact sequence  $1 \rightarrow M^*(F)_1/M^*_m \rightarrow M^*(F)/M^*_m \rightarrow \Omega_{M^*} \rightarrow 1$ is not split. We prove in Proposition \ref{MmCLF} that the group extensions
\[
\begin{tikzcd}
 & M^*(F)/M^*_m\arrow[dr,"\phi"] \\
1\rightarrow M^*(F)_1/M^*_m \arrow[ur] \arrow[dr] &&\Omega_{M^*} \rightarrow 1\\
&M'^*(F')/M_m'^*\arrow[ur,"\phi'"]
\end{tikzcd} \] 
are equivalent provided the fields $F$ and $F'$ are sufficiently close. Further, for each $\tau \in \Omega_{M^*}$ (which we have identified with $ \Omega_{M'^*})$, we have $\tp(\tau) \mod M_m^* \rightarrow \tp'(\tau) \mod M_m'^*$ under this isomorphism.  Finally, after establishing some technical results in Section \ref{MT} that allow us to compare the various objects in (1') - (3') for sufficiently close local fields $F$ and $F'$, we follow the strategy of Kazhdan and prove in Theorem \ref{MainTheorem} that the Hecke algebras $\cH(G^*(F), K_m^*)$ and $\cH(G'^*(F'), K_m'^*)$ are isomorphic provided the fields $F$ and $F'$ are sufficiently close.

\section*{Acknowledgments}
I express my gratitude to J.K.Yu for introducing me to questions related to this article and for the insightful discussions during my graduate school years.  I am grateful to Thomas Haines for pointing out an error in an earlier version of the proof of Proposition \ref{MmCLF} and for some helpful correspondence regarding it. Some of the lemmas in Section \ref{KottwitzTorus} were originally part of \cite{RX}, but did not make it to the final version; I thank Xuhua He for allowing me to include it here. I thank Maarten Solleveld and Marie-France Vign\'eras for some helpful comments and suggestions. Finally, I thank the referee for many useful suggestions that have improved the presentation and the readability of this article.

\section{Notation and preliminaries}
\subsection{Deligne's theory} Let $F$ be a non-archimedean local field, $\fO_F$ its ring of integers, $\fp_F$ its maximal ideal, and $\varpi_F$ a uniformizer. Fix a separable closure $F_s$ of $F$ and let $\Gamma_F= \Gal(F_s/F)$. Let $\bF$ be the completion of the maximal unramified extension of $F$ contained in $F_s$ and let $\sigma \in \Aut(\bF/F)$ denote the Frobenius automorphism. 

Let $m \geq 1$.  Let $I_F$ be the inertia group of $F$ and  $I_F^m$ be its $m$-th  higher ramification subgroup with upper numbering (cf. \cite[Chapter IV]{Ser79}). Let us summarize the results of Deligne \cite{Del84} that will be used later in this work.  Deligne considered the triplet $\Tr_m(F) = (\fO_F/\fp_F^m, \fp_F/\fp_F^{m+1}, \epsilon)$, where $\epsilon$ = natural\pagebreak[2] projection of $\fp_F/\fp_F^{m+1}$ on $\fp_F/\fp_F^m$, and proved that $ \Gamma_F/I_F^m$, 
together with its upper numbering filtration, is canonically determined by $\Tr_m(F)$. Hence an isomorphism of triplets $\psi_m: \Tr_m(F) \rightarrow \Tr_m(F')$ gives rise to an isomorphism
\begin{equation}\label{Deliso}
\Gamma_F/I_F^m \xrightarrow{\Del_m} \Gamma_{F'}/I_{F'}^m
\end{equation}
that is unique up to inner automorphisms (see\cite[Equation 3.5.1]{Del84}). More precisely, given an integer $f \geq 0$, let $ext(F)^f$ be the category of finite separable extensions $E/F$  satisfying the following condition: The normal closure $E_1$ of $E$ in $F_s$ satisfies $\Gal(E_1/F)^f = 1$. Deligne proved that an isomorphism $\psi_m: \Tr_m(F) \rightarrow \Tr_m(F')$ induces an equivalence of categories
$ext(F)^m \rightarrow ext(F')^m$. Here is a partial description of the map $\Del_m$ (see \cite[Section 1.3]{Del84}).  Let $L$ be a finite totally ramified Galois extension of $F$ satisfying $I(L/F)^m = 1$ (here $I(L/F)$ is the inertia group of $L/F$). Then  $L = F(\alpha)$ where $\alpha$ is a root of an Eisenstein polynomial \[P(x) = x^n + \varpi_F\sum a_ix^i\] for $a_i \in \fO_F$.  Let $a_i' \in \fO_{F'}$ be such that $a_i \mod \fp_F^m \rightarrow a_i' \mod \fp_{F'}^m$. So $a_i'$ is well-defined mod $\fp_{F'}^m$.
Then the corresponding extension $L'/F'$ can be obtained as $L' = F'(\alpha')$ where $\alpha'$ is a root of the polynomial \[P'(x) = x^n + \varpi_{F'} \sum a_i'x^i\] where $\varpi_F \mod \fp_F^m \rightarrow \varpi_{F'} \mod \fp_{F'}^m$.  The assumption that $I(L/F)^m =1$ ensures that the extension $L'$ does not depend on the choice of $a_i'$, up to a unique isomorphism.
\subsection{Kazhdan's theory}\label{KazhdanIsomorphism}
Let us recall the results of \cite{kaz86}. Let $G$ be a split connected reductive group defined over $\bbZ$. Let $K_m = \Ker({G}(\fO_F) \rightarrow {G}(\fO_F/\fp_F^{m}))$ be the $m$-th usual congruence subgroup of $G$.
Fix a Haar measure $dg$ on $G$ with $\vol(K_m; dg) =1$.  The set $\{ \mathbbm{1}_{K_m x K_m}| x \in G(F)\}$ forms a $\mathbb{C}$-basis of the Hecke algebra $\cH(G(F), K_m)$ (of compactly supported $K_m$-biinvariant complex valued functions on $G(F)$).
Let \[X_*({T})_+=\{\lambda \in X_*({T}) \,| \,\langle \alpha, \lambda\rangle \,\geq 0 \; \forall\;\alpha \in \Phi^+\}.\] Let $\varpi_\lambda = \lambda(\varpi_F)$ for $\lambda \in X_*({T})_+$. Consider the Cartan decomposition of $G$:
\[ G(F) = \displaystyle{\coprod_{\lambda \in X_*({T})_+} {G}(\fO_F)\varpi_\lambda {G}(\fO_F)}.\]

The set $ {G }(\fO_F)\varpi_\lambda{G}(\fO_F)$ is a homogeneous space of the group $ {G }(\fO_F)\times {G}(\fO_F)$ under the action $(a,b).g = agb^{-1}$. 
The set $\{ {K_m}xK_m| x \in {G }(\fO_F)\varpi_\lambda {G}(\fO_F) \}$ is then a homogeneous space of the finite group ${G}(\fO_F/\fp_F^{m}) \times {G}(\fO_F/\fp_F^{m})$. 
 Let $\Gamma_\lambda \subset  {G}(\fO_F/\fp_F^{m}) \times {G}(\fO_F/\fp_F^{m})$ be the stabilizer of the double coset $K_m \varpi_\lambda K_m$.
 Kazhdan  observed that the obvious isomorphism 
  \begin{align}\label{Kmiso}
  G(\fO_F)/K_m \cong {G}(\fO_F/\fp_F^{m})  \xrightarrow{\cong}  {G}(\fO_{F'}/\fp_{F'}^{m})  \cong G(\fO_{F'})/K_m'
  \end{align}
    maps $\Gamma_\lambda \rightarrow \Gamma_\lambda'$, where $\Gamma_\lambda'$ is the corresponding object for $F'$.
  Let $T_\lambda \subset {G }(\fO_F)\times {G}(\fO_F) $ be a set of representatives of $\left({G}(\fO_F/\fp_F^{m})\times {G}(\fO_F/\fp_F^{m})\right) /\Gamma_\lambda$. 
Similarly define $T_{\lambda}'$. Then we have a bijection $T_\lambda \rightarrow T_\lambda'$. Kazhdan constructed 
an isomorphism of $\mathbb{C}$-vector spaces
\begin{align*} \cH(G(F), K_m) \xrightarrow{\Kaz_m} \cH(G(F'), K_m')
\end{align*}
by requiring that
\begin{align*}
\mathbbm{1}_{K_m a_i\varpi_\lambda a_j^{-1} K_m} \mapsto \mathbbm{1}_{K_m a_i'\varpi'_\lambda a_j'^{-1} K_m}
 \end{align*}
for all $\lambda \in X_*({T})_+$ and $(a_i,a_j) \in T_\lambda$, where
$(a_i',a_j')$ is the image of $(a_i, a_j)$ under the
bijection $T_{\lambda} \rightarrow T_\lambda'$. He then proved the following theorem.
\begin{theorem}[Theorem A of \cite{kaz86}] \label{Kaziso} Given $m \geq 1$, there exists $ l \geq m$ such that if $F$ and $F'$ are $l$-close, the map $\Kaz_m$ constructed above is an algebra isomorphism.
\end{theorem}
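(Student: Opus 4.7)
The plan is to use the fact that $\Kaz_m$ is, by construction, already a $\bbC$-linear bijection, and to reduce multiplicativity to a finite combinatorial check that stabilises once $l$ is large enough. As a first step, I would invoke the finite presentability of $\mathscr{H}(G(F),K_m)$ recalled in (1') (Theorem 2.12 and Lemma 3.4 of \cite{Ber84}) to fix a finite generating set $\{t_{x_1},\ldots,t_{x_N}\}$. Since $\Kaz_m$ is already a linear isomorphism, it becomes an algebra isomorphism as soon as
\[
\Kaz_m(t_{x_i}*t_{x_j}) \;=\; \Kaz_m(t_{x_i})*\Kaz_m(t_{x_j}) \qquad\text{for every pair } (i,j).
\]

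Next I would analyse the convolution product combinatorially. Expanding $t_{x_i}*t_{x_j}=\sum_z c^z_{ij}\,t_z$ in the basis $\{t_z\}$, one obtains the structure constants as the cardinalities
\[
c^z_{ij} \;=\; \#\bigl\{\,aK_m\subset K_m x_i K_m \;:\; a^{-1}z\in K_m x_j K_m\,\bigr\},
\]
each of which is finite. The support of $t_{x_i}*t_{x_j}$ lies inside the bounded set $K_m x_i K_m\cdot K_m x_j K_m$, which meets only finitely many Cartan double cosets $G(\fO_F)\pi_\lambda G(\fO_F)$ with $\lambda$ in a finite subset of $X_*(T)_-$. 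Thus for each pair $(i,j)$ the check reduces to equality of finitely many integers $c^z_{ij}=c^{z'}_{i'j'}$, where the primes denote the objects corresponding to $F'$ under the bijection defining $\Kaz_m$.

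The quantitative input is that every $aK_m\subset K_m x_i K_m$ admits a representative in $G(\fO_F)$, and after translating by $\pi_\lambda^{-1}$ the condition $a^{-1}z\in K_m x_j K_m$ becomes a condition on the image in $G(\fO_F/\fp_F^l)$, provided $l$ is larger than a bound depending only on the finitely many $\lambda$ that appear and on $m$. Under the isomorphism $G(\fO_F/\fp_F^l)\cong G(\fO_{F'}/\fp_{F'}^l)$ from \eqref{Kmiso} (with $m$ replaced by $l$), together with the matching $\pi_\lambda\leftrightarrow\pi'_\lambda$ coming from a choice of compatible uniformizers, the combinatorial count on the $F$-side is visibly identified with the one on the $F'$-side, giving $c^z_{ij}=c^{z'}_{i'j'}$. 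Taking $l$ uniformly over the finitely many generator pairs then concludes the proof.

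The main obstacle is precisely this quantitative stabilisation: one has to control, uniformly over the finitely many bounded regions that enter, how large $l$ must be so that both the $K_m$-left-coset decomposition of $K_m x_i K_m$ and the double-coset membership test after shifting are already determined modulo $\pi_F^l$. This rests on a careful use of the Cartan decomposition and the explicit form $\pi_\lambda=\lambda(\pi_F)$, and is the technical heart of Kazhdan's argument; the fact that $G$ is split, so that $T$, $\pi_F$, and the cocharacters $\lambda$ interact transparently, is what keeps the bookkeeping manageable in this classical setting.
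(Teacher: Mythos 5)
Your reduction has a genuine gap. The sentence ``Since $\Kaz_m$ is already a linear isomorphism, it becomes an algebra isomorphism as soon as $\Kaz_m(t_{x_i}*t_{x_j}) = \Kaz_m(t_{x_i})*\Kaz_m(t_{x_j})$ for every pair $(i,j)$'' is false. A linear bijection that is multiplicative on a finite set of pairs of generators need not be multiplicative: the products $t_{x_i}*t_{x_j}$ span only a finite-dimensional subspace of the (infinite-dimensional) Hecke algebra, and nothing constrains $\Kaz_m$ on products of three or more generators. You invoke finite \emph{presentability} in the opening paragraph but then use only finite \emph{generation}, and that is precisely the point at which the argument breaks.

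What Kazhdan actually does — and what the paper does when it proves the generalization in the proof of Theorem \ref{MainTheorem} — uses the relations in an essential way. One takes a surjection $\nu\colon A=\bbC\langle x_1,\dots,x_r\rangle \twoheadrightarrow \cH(G(F),K_m)$ whose kernel is generated by finitely many noncommutative polynomials $P_1,\dots,P_s$ of degree $\le N$. The map $x_i\mapsto \Kaz_m(h(a_i))$ then defines $\nu'\colon A\to\cH(G(F'),K_m')$, and one must show $\nu'$ kills each $P_j$. Because each $P_j$ has degree $\le N$, this involves convolutions supported on a fixed bounded region $G_{\cC}(F)$, and the quantitative stabilisation (your ``main obstacle''; Lemma \ref{nC} in the paper, which needs $gK_nK g^{-1}\subset K_m$ for all $g$ in that region) is applied there, not to pairs of generators. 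This produces an algebra homomorphism $\tilde h_{m,*}$ agreeing with $\Kaz_m$ on the generators; one still has to argue that $\tilde h_{m,*}=\Kaz_m$ on the full basis, which the paper does via Proposition \ref{simpleconv} and Corollary \ref{generators} (every $h(y)$ is a product $h(g_1)*h(m_\lambda)*h(g_2)$ of generators). Only then does the a priori linear bijection $\Kaz_m$ become multiplicative. Your combinatorial analysis of the structure constants $c^z_{ij}$ is a reasonable sketch of the content of Lemma \ref{nC}, but the surrounding algebraic reduction needs the relations and the ``every double coset is a bounded product of generators'' step; without them, the argument does not close.
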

An irreducible, admissible representation $(\tau, V)$ of $G(F)$ such that $\tau^{K_m} \neq 0$ naturally becomes an $\cH(G(F), K_m)$-module. Hence, if the fields $F$ and $F'$ are sufficiently close,  $\Kaz_m$ gives a  bijection
 \begin{align}\label{Kazreptrans}
 &\text{\{Iso. classes of irr. ad. representations $(\tau, V)$ of $G(F)$ with $\tau^{K_m} \neq 0$\}}\nonumber \\
  & \longleftrightarrow\text{\{Iso. classes of irr. ad. representations  $(\tau', V')$ of $G'(F')$ with $\tau'^{K_m'} \neq 0$\}}.
\end{align}

The purpose of this article is to generalize Theorem \ref{Kaziso} to general connected, reductive groups.

\subsection{Summary of \cite{Gan18}}The main goal of \cite{Gan18} is to study the reduction of parahoric group schemes, attached to points or facets in the Bruhat-Tits building of a connected reductive group $G$ over $F$, mod $\fp_F^m$ and prove they are isomorphic for sufficiently close fields.  We will first recall the result of Chai-Yu \cite{CY01} for tori and then summarize the results of \cite{Gan18} that will be used in this work. In the process we will also introduce some notation that will be needed for the rest of the article.
\subsubsection{The case of tori by Chai-Yu}\label{CY}
Let $T$ be a torus over $F$. Then $T$ is determined by the $\Gamma_F$-module $X_*(T)$ up to a canonical isomorphism. Let $\cT$ be the identity component of the N\'eron-Raynaud model of $T$. 

 Let $m \geq 1$ be such that $T$ splits over an  at most $m$-ramfied Galois extension of $F$.  Then the action of $\Gamma_F$ on $X_*(T)$ factors through $\Gamma_F/I_F^m$. For any field $F'$ that is at least $m$-close to $F$, we obtain a torus $T'$ over $F'$  via the action of $\Gamma_{F'} \rightarrow \Gamma_{F'}/I_{F'}^m \xrightarrow[\cong]{\Del_m^{-1}}\Gamma_F/I_F^m$ on $X_*(T)$. This torus splits over an at most $m$-ramified extension of $F'$. Let $\cT'$ be the identity component of the N\'eron-Raynaud model of $T'$. 
 \begin{theorem}[Section 9 of \cite{CY01}]\label{MTCY} Given $m \geq 1$ there exists $e\geq m$ such that for any field $F'$ that is $e$-close to $F$, the group schemes $\cT \times_{\fO_F} \fO_F/\fp_F^m$ and $\cT' \times_{\fO_{F'}} \fO_{F'}/\fp_{F'}^m$ are isormorphic. In particular, 
\[\cT (\fO_F/\fp_F^m) \cong \cT (\fO_{F'}/\fp_{F'}^m)\] as groups. 
\end{theorem}
Next, we summarize the results of \cite{Gan18} that will be used later in the work. 
\subsubsection{The quasi-split case}\label{Gan18QS} Let $(R,\Delta)$ be a based root datum and let $(G_0, T_0, B_0, \{x_\alpha\}_{\alpha \in \Delta})$ be a pinned, split, connected, reductive $\bbZ$-group with based root datum $(R, \Delta)$. We know that the $F$-isomorphism classes of quasi-split groups $G$ that are $F$-forms of $G_0$ are parametrized	by the pointed cohomology set $H^1(\Gamma_F, Aut(R, \Delta))$. Let $E_{qs}(F, G_0)_m$  be the set of $F$-isomorphism classes of quasi-split groups $G$ that split (and become isomorphic to $G_0$) over an at most $m$-ramified extension of $F$. It is easy to see that this is parametrized by the cohomology set
$H^1(\Gamma_F/I_F^m, Aut(R, \Delta))$ (See \cite[Lemma 3.1]{Gan18}). Using the Deligne isomorphism, it is shown that there is a bijection $E_{qs}(F, G_0)_m \rightarrow E_{qs}(F', G_0')_m,\;\, G \rightarrow G',$ provided $F$ and $F'$ are $m$-close (See \cite[Lemma 3.3]{Gan18}).  Moreover, with the cocycles chosen compatibly, this will yield data $(G, T, B)$ over $F$ (where $T$ is a maximal $F$-torus and $B$ is an Borel subgroup of $G$ containing $T$ and defined over $F$), and correspondingly $(G', T', B')$ over $F'$, together with an isomorphism $X_*(T) \rightarrow X_*(T')$ that is $\Del_m$-equivariant (see \cite[Lemma 3.4]{Gan18}). It is a simple observation that the maximal $F$-split subtorus $S$ of $T$ is a maximal $F$-split torus in $G$ (see \cite[Lemma 4.1]{Gan18}). Let $\cA_m: \cA(S,F) \rightarrow \cA(S', F')$ be the simplicial isomorphism in \cite[Proposition 4.4 and Lemma 4.9]{Gan18}). Let $\cF$ be a facet in $\cA(S,F)$ and $\cF' = \cA_m(\cF)$. We then have the following theorem:

\begin{theorem}[Theorem 4.5 and Proposition 4.10 of \cite{Gan18}]\label{PGCLFQ}
Let $m\geq 1$. There exists $e>>m$ such that if $F$ and $F'$ are $e$-close, then  parahoric group schemes $\cP_{\cF} \times_{\fO_F} \fO_F/\fp_F^m$ and $\cP_{\cF'} \times_{\fO_{F'}} \fO_{F'}/\fp_{F'}^m$ are isormorphic. In particular, 
\[\cP_{\cF} (\fO_F/\fp_F^m) \cong \cP_{\cF'} (\fO_{F'}/\fp_{F'}^m)\] as groups. 
\end{theorem}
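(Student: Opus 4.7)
The plan is to build $\cP_\cF$ explicitly via Bruhat-Tits and then show that each step of the construction, when read modulo $\fp_F^m$, is determined by the truncation $\Tr_e(F)$ for a suitable $e \gg m$. Bruhat-Tits factorizes $\cP_\cF$ into three essentially independent pieces --- a torus, finitely many filtered root subgroup schemes, and the Galois descent datum for the splitting extension --- and each piece will be transferred to $F'$ separately using the close-fields isomorphism.

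I would first fix a finite Galois extension $L/F$ that splits $G_q$ and belongs to $ext(F)^{e_0}$ for some $e_0$; by Deligne's theory from Section 2.1, if $F$ and $F'$ are $e_0$-close one obtains a canonical companion $L'/F'$, an isomorphism $\Gal(L/F) \cong \Gal(L'/F')$, and an equivariant isomorphism $\fO_L/\fp_L^{e_1} \cong \fO_{L'}/\fp_{L'}^{e_1}$ for some $e_1$ controlled by $e_0$ and the ramification of $L/F$. Over $\fO_L$ the parahoric scheme is built as a smooth connected affine group with big cell $\prod_{\alpha \in \Phi^-} \cU_{\alpha,f(\alpha)} \times \cT \times \prod_{\alpha \in \Phi^+} \cU_{\alpha,f(\alpha)}$, where $\cT$ is the split torus of $G_q$ over $\fO_L$, each $\cU_{\alpha, f(\alpha)}$ is the Moy--Prasad-filtered root subgroup corresponding to the concave function $f$ attached to $\cF$, and multiplication is encoded by the integral Chevalley structure constants. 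Galois descent along $L/F$, using the cocycle defining $G_q$ as an $F$-form of $G_0$, then yields $\cP_\cF$ over $\fO_F$; the same recipe applied over $\fO_{L'}$ yields $\cP_{\cF'}$.

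Modulo $\fp_L^{e_1}$ each ingredient --- the torus $\cT$, the truncated root subgroup schemes, the structure constants, and the descent cocycle --- is encoded by data transported verbatim under the equivariant ring isomorphism from the previous paragraph. Reducing further modulo $\fp_F^m$ (equivalently modulo a power of $\fp_L$ determined by $m$ and $e(L/F)$) therefore produces identical schemes for $F$ and $F'$, yielding the desired isomorphism $\cP_\cF \times_{\fO_F} \fO_F/\fp_F^m \cong \cP_{\cF'} \times_{\fO_{F'}} \fO_{F'}/\fp_{F'}^m$.

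The main obstacle, and the reason one must take $e \gg m$, is careful bookkeeping to choose $e$ absorbing three simultaneous shifts: (i) the ramification degree $e(L/F)$ of the splitting extension, needed to translate between $\fp_L$-adic and $\fp_F$-adic depth; (ii) the most negative Moy--Prasad shift $f(\alpha)$ appearing in the big cell for $\cF$, which lowers the integral level at which the root subgroup schemes are defined; and (iii) the depth at which the Galois cocycle defining $G_q$ is known. Making these three shifts explicit and verifying that a single $e$ dominates all of them uniformly is the technical heart of the construction in \cite{Gan18}.
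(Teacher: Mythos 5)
The statement you are proving is not actually proved in this paper: it is Theorem 4.5 and Proposition 4.10 of the companion paper \cite{Gan18}, and the present article only cites it in its Section 3 summary. So there is no ``paper's own proof'' here to compare against line by line; I can only compare your sketch with what the summary and the surrounding lemmas of this paper reveal about the argument in \cite{Gan18}.

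Your high-level decomposition (torus piece, filtered root-subgroup pieces, descent datum, plus bookkeeping for the closeness parameter $e$) is the right shape and does reflect the Bruhat--Tits big-cell construction that \cite{Gan18} works with. Two points, however, are glossed over in a way that hides where the actual work lies. First, the construction in Bruhat--Tits and in \cite{Gan18} does not proceed by Galois descent from a splitting field $L$ directly to $F$: one first constructs $\cP_{\cF}$ over $\fO_{\Fu}$ (where $G_q$ is quasi-split and the root-group filtrations are defined intrinsically, encoding the ramification of $L/\Fu$), and only then performs \'etale descent from $\fO_{\Fu}$ to $\fO_F$. Saying the cocycle defining $G_q$ as an $F$-form of $G_0$ ``yields $\cP_\cF$ over $\fO_F$'' by descent collapses this two-step structure and, in particular, papers over the fact that the root-group schemes $\cU_{a}$ over $\fO_{\Fu}$ are not simply base changes of split root groups when $L/\Fu$ is ramified. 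Second, your claim that the torus factor $\cT$ is ``encoded by data transported verbatim under the equivariant ring isomorphism'' is too optimistic: the relevant integral model is the connected N\'eron model, and its compatibility with the close-fields transfer is a genuine theorem of Chai--Yu \cite{CY01} on congruences of N\'eron models, which this paper explicitly invokes in the proof of Lemma \ref{NMTCLF}. That input would need to be cited, not asserted as automatic. With these two points made explicit, your outline would match the strategy of \cite{Gan18}; as written, it identifies the ingredients but understates the technical content of the torus step and misdescribes the descent step.
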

\subsubsection{The case of inner forms}\label{Gan18IF}
We recall that any connected reductive group is an inner form of a quasi-split group, and the isomorphism classes of inner twists of a quasi-split group $G$ over $F$ is parametrized by the cohomology set $H^1(\Aut(\Fu/F), G_{\ad}(\Fu))$.  With data $(G,T, B)$ corresponding to $(G', T', B')$ as above, it is shown in  \cite[Lemma 5.1]{Gan18} that \[H^1(\Aut(\Fu/F), G_{\ad}(\Fu)) \cong H^1(\Aut(\Fu'/F'), G_{\ad}'(\Fu'))\]
as pointed sets if the fields $F$ and $F'$ are $m$-close using the work of Kottwitz (\cite{Kot16}). Using the ideas of Debacker-Reeder \cite{DR} it is further possible to refine the above and obtain an isomorphism at the level of cocycles. This allows us to construct ``compatible" Frobenius morphisms $\sigma^*$ and $\sigma'^*$ over $F$ and $F'$ respectively and yields data $(G^*, S^*, A^*)$ where $G^* = G_\bF^{\sigma^*}$ is a connected reductive group over $F$ that is an inner form of $G$, a maximal $\Fu$-split $F$-torus $S^*$ that contains a maximal $F$-split torus $A^*$ of $G^*$, and similarly $(G'^*,S'^*,A'^*)$ over $F'$, together with a $\sigma^*$-equivariant simplicial isomorphism $\cA_{m}^*: \cA(S^*,\Fu) \rightarrow \cA(S'^*,\Fu')$. Let us explain the construction of $\sigma^*$ in more detail. 

 Let $\bcC$ be an $\sigma$-stable alcove in $\cA(S,\Fu)$.  By \cite[Corollary 2.4.3]{DR}, we have isomorphisms
\[H^1(\Aut(\Fu/F), \Omega_{\bcC, \ad}) \cong H^1(\Aut(\Fu/F), G_{\ad}(\Fu)).\]

Let $c$ be a cocycle in  $Z^1(\Aut(\Fu/F), \Omega_{\bcC, \ad})= \Omega_{\cC, \ad}$.
Let $G^*$ be the inner form of $G$ determined by $c$. Let $c(\sigma)=\bnu_\ad$ for some $\bnu_\ad \in \Omega_{\bcC, \ad}$. Write $\bnu_\ad = t_{\bea}\bz$ where $\bea\in X_*(T_{\ad})_{I_F}$, $t_\bea$ denotes the translation by $\bea$, and $\bz \in W(G,S)$. Let $L \subset F_s$ denote the finite at most $m$-ramified extension of $\Fu$ over which $G_{\Fu}$ splits. Let $n_{\bea} =\Nm_{L/\bF}(\bea(\varpi_L)) \in T_\ad(\bF)$ be a representative of $t_{\bea}$. We also fix a system of pinnings $\{x_\ba\;|\; \ba \in \breve\Phi(G,S)\}$ that is $\sigma$-stable (see \cite[Section 4.1]{BT2}); such a system of pinnings exists since $G$ is quasi-split over $F$. For each $\ba \in \breve\Delta(G,S)$, let $n_{s_\ba} = x_\ba(1) x_{-\ba}(1) x_\ba(1) \in N_G(S)(\bF)$. For $\bz = W(G,S)$, write $\bz = s_{\ba_1} \cdots s_{\ba_k}$ where $\ba_i \in \breve\Delta(G,S)$. Set $n_\bz = n_{s_{\ba_1}}\cdots n_{s_{\ba_k}}$. Then $n_\bz \in N_G(S)(\bF)$ and is independent of the choice of reduced expression of $\bz$. 

Then $\tc(\sigma)=n_{\bea}n_{\bz} \in Z^1(\Gal(\Fu/F), G_\ad(\bF)).$   Note that $\tc(\sigma) \in G_\ad(\Fu) = (\Inn(G))(\Fu)$. Let $g_\be \in T(F_s)$ be such that $j(g_\be) = n_\bea$. Let $g_\bnu = g_\be n_\bz \in G(F_s)$. Then $\tc(\sigma) = Ad(g_\bnu)$. 
Define the Frobenius action $\sigma^*$ on  element $g\in G(\Fu)$ by \[\sigma^* \cdot g = (\tc(\sigma)(\sigma \cdot g))\] (Here $\sigma \cdot g$ denotes the action of $\sigma$ on $g \in G(\Fu)$). Set $G^* = G_\bF^{\sigma^*}$. Then $G^*$ is an inner twist of $G$ whose $F$-isomorphism class in determined by the class of $\bnu_\ad$ in $H^1(\Aut(\Fu/F), \Omega_{\bcC, \ad})$. The maximal $\Fu$-split torus $S$ of $G$ gives a maximal $\Fu$-split, $\Fu$-torus $S^*$ in $G^*$. As noted in \cite[Lemma 5.4]{Gan18}. $S^*$ is defined over $F$ and with $A^*$ denoting the $F$-split torus of $G^*$ determined by $X^*(S^*)^{\sigma^*}$, $A^*$ is a maximal $F$-split torus in $G^*$. 

Now, assume $F'$ is $m$-close to $F$ and let $(G, T, B)$ correspond to $(G', T', B')$ as above. Let $\bcC' = \cA_m(\bcC)$. Then $\bcC'$ is $\sigma'$-stable and $\Omega_{\bcC, \ad} \cong \Omega_{\bcC', \ad}$. Let $\bnu_\ad'$ be the image of $\bnu_\ad$ under this isomorphism. We analogously construct $\sigma'^*$ and set $G'^* = G_{\bF'}'^{\sigma'^*}$ (See \cite[Section 5.A]{Gan18}). This yields data $(G'^*, S'^*, A'^*)$ together with a $\sigma^*$-equivariant simplicial isomorphism $\cA_m^*: \cA(S^*, \bF^*)\rightarrow \cA(S'^*, \bF'^*)$.

\begin{theorem}\label{PGCLFI}[Proposition 6.2 and Corollary 6.3 of \cite{Gan18}]
For $m \geq 1$, there exists $e>>m$ such if the fields $F$ and $F'$ are $e$-close, then  with $\bcF^*$ a facet in $\cA(S^*,\Fu)$, $\bcF'^* = \cA_{m}^*(\bcF^*)$, $\cF^* :=(\bcF^*)^{\sigma^*}$ and $\cF'^* :=(\bcF'^*)^{\sigma'^*}$, we have an isomorphism of the group schemes \[p_{m}^*: \cP_{\cF^*} \times_{\fO_F} \fO_F/\fp_F^m \rightarrow \cP_{\cF'^*} \times_{\fO_{F'}} \fO_{F'}/\fp_{F'}^m.\]
In particular,
\[\cP_{\cF^*} (\fO_F/\fp_F^m) \cong \cP_{\cF'^*} (\fO_{F'}/\fp_{F'}^m)\] as groups when $F$ and $F'$ are $e$-close.
\end{theorem}
We note that the integer $e$ that appears in this theorem (and Theorem \ref{PGCLFQ}) is the same integer that appears in the work of Chai-Yu in Theorem \ref{MTCY}, applied to a maximal torus of $G^*$. 
\subsection{The Kottwitz homomorphism}\label{KM} Let $G^*$ be a connected, reductive group over $F$. In \cite[Section 7]{Kot97}, Kottwitz constructed a surjective group homomorphism $\kappa_{G^*, \bF}: G^*(\bF) \rightarrow X^*(Z(\hat G^*))_{I_F}$ and proved that this induces a surjective homomorphism $\kappa_{G^*, F}: G^*(F) \rightarrow X^*(Z(\hat G^*))_{I_F}^{\sigma^*}$. Let $G^*(\bF)_1$ be the kernel of $\kappa_{G^*, \bF}$ and let $G^*(F)_1 = G^*(\bF)_1 \cap G^*(F)$ be the kernel of $\kappa_{G^*, F}$. 

In Section \ref{KottwitzTorus}, we will prove that the Kottwitz homomorphism admits a group theoretic section when $G^*$ is a torus. In Section \ref{KottwitzM}, we will construct a set-theoretic section of the Kottwitz homomorphism when $G^*$ is connected reductive group such that $G^*_{\ad}$ is $F$-simple and anisotropic over $F$; this will turn out to be a group theoretic section when $G^*$ is itself adjoint. These results will be used to prove Theorem \ref{MainTheorem}.

\section{Section of the Kottwitz homomorphism for tori}\label{KottwitzTorus}

Let $T$ be a torus over $F$.  We will construct a group theoretic section of $\kappa_{T, \bF}: T(\bF) \rightarrow X_*(T)_{I_F}$ that is $\sigma$-stable. This will then yield a group theoretic section of $\kappa_{T,F}: T(F) \rightarrow X_*(T)_{I_F}^\sigma$. We will then prove a comparison lemma over close local fields that will be used later in this article.

\begin{remark}
Let $\tF \subset F_s$ be the splitting extension of $T$ over $\bF$. By \cite[Section 7.2]{Kot97}, we have the following commutative diagram
\begin{equation}\label{KottTD}
\begin{tikzcd}
T(\tF) \arrow{r}{\kappa_{T,\tF}} \arrow{d}{\Nm_{\tF/\bF}}
&X_*(T) \arrow{d}{\pr}\\
 T(\bF)\arrow{r}{\kappa_{T, \bF}} &X_*(T)_{I_F}.
\end{tikzcd}\end{equation} 
For each $\blambda \in X_*(T)_{I_F}$ one may choose $\tilde\lambda \in X_*(T)$ with $\pr(\tilde\lambda) = \breve\lambda$ and choose a representative of $\blambda$ as $\Nm_{\tF/\bF}(\tilde\lambda(\varpi_\tF)) \in T(\bF)$. However, in general, such a set $\{\Nm_{\tF/\bF}(\tilde\lambda(\varpi_\tF))\; |\; \blambda \in X_*(T)_{I_F}\}$ need not form a group. Some extra work is needed to obtain representatives that form a group and is $\sigma$-stable.
\end{remark}

\subsection{$\Gamma_F$-stable representatives of $X_*(T)$: the case of induced torus}\label{Sec:IT} Let $T = Res_{L/F} \bbG_m$, where $L$ is a finite separable extension of $F$. Let $\bL = L \cap \bF$ and let $f$ be the degree of $\bL$ over $F$. Let $\tL$ be the Galois closure of $L$ in $F_s$. Let $\tF =\tL \Fu$. Fix a uniformizer $\varpi_\tL$ of $\tL$ such that $\Nm_{\tL/\bL}\varpi_\tL = \varpi_F$.

Since $T$ is induced, its cocharacter lattice admits a basis $\cB: =\{\tilde\lambda_1, \cdots, \tilde\lambda_n\}$ that is permuted simply transitively by $\Gal(\tL/F)$. Set
\[n_{\tilde\lambda_1}: = \tilde\lambda_1(\varpi_\tL).\]
For each $i>1$, there exists a unique $\gamma \in \Gal(\tL/F)$ such that $\tilde\lambda_i = \gamma(\tilde\lambda_1)$. Set
\[n_{\tilde\lambda_i}: = \gamma(n_{\tilde\lambda_1}).\]

Given $\tilde\lambda \in X_*(T)$, write $\tilde\lambda: = \sum_i c_i \tilde\lambda_i$. Set $n_{\tilde\lambda} = \prod_i n_{\tilde\lambda_i}^{c_i}$. 

\begin{lemma}\label{tlGstable}
The set $\{n_{\tilde\lambda}\;|\; \tilde\lambda \in X_*(T)\}\subset T(\tL)$ forms a group. Further, it is $\Gamma_F$-stable. 
\end{lemma}
\begin{proof}
It is clear that $\{n_{\tilde\lambda}\;|\; \tilde\lambda \in X_*(T)\}$ forms a group. To prove that it is $\Gamma_F$-stable,  it suffices to that the set $\{n_{\tilde\lambda_i}\; |\; 1 \leq i \leq n\}$ is $\Gamma_F$-stable. To see this, we simply note that since $T$ splits over $L$, we have for $\gamma \in \Gal(F_s/\tL)$, $\gamma(\tilde\lambda_i) = \tilde\lambda_i$. Also $\gamma(\varpi_\tL) = \varpi_\tL$. This finishes the proof of the lemma.
\end{proof}

\subsection{$\Gamma_F$-stable representatives of $X_*(T)$: the general case}\label{TorusLifting} Let $T$ be any torus over $F$. Following \cite[Section 7.2]{Kot97}, we choose induced tori $R$ and $S$ defined over $F$ such that
\[S \rightarrow R \xrightarrow{\psi} T \rightarrow 1\]
and we have an exact sequence of $\Gamma_F$-modules
\[X_*(S) \rightarrow X_*(R) \xrightarrow{\psi} X_*(T) \rightarrow 1.\] 
Fix a $\Gamma_F$-stable set $\{n_{\tilde\mu}\;|\; \tilde\mu \in X_*(R)\}$ (see Lemma \ref{tlGstable}).  For each $\tilde\lambda \in X_*(T)$, choose $\tilde\mu \in X_*(R)$ such that $\psi(\tilde\mu) = \tilde\lambda$. Set $n_{\tilde\lambda} = \psi(n_{\tilde\mu})$. 
\begin{lemma}\label{tlGstableg} 
The set $\{n_{\tilde\lambda}\;|\; \tilde\lambda \in X_*(T)\}$ forms a group and is $\Gamma_F$-stable.
\end{lemma}
\begin{proof}
The set $\{n_{\tilde\mu}\;|\; \tilde\mu \in X_*(R)\}$ forms a group and is $\Gamma_F$-stable by Lemma \ref{tlGstable}. Now the lemma follows using the fact that $\psi$ is a group homomorphism and is $\Gamma_F$-equivariant.
\end{proof}

\subsection{$\sigma$-stable representatives of $X_*(T)_{I_F}$}\label{sstableT}
Let $T$ be a torus over $F$ and let $\pr: X_*(T) \rightarrow X_*(T)_{I_F}$ be as in Diagram \ref{KottTD}.  Fix a $\Gamma_F$-stable set of representatives $\{n_{\tilde\lambda}\;|\; \tilde\lambda \in X_*(T)\}$ (see Lemma \ref{tlGstableg}). Let $\breve\lambda \in X_*(T)_{I_F}$ and let $\tilde\lambda \in X_*(T)$ such that $\pr(\tilde\lambda) = \breve\lambda$. Set 
\begin{equation}\label{nbli}
n_{\breve\lambda}: = \Nm_{\tF/\Fu} n_{\tilde\lambda}.
\end{equation}
\begin{lemma}\label{tlsstable}
The definition of $n_{\breve\lambda}$ in \eqref{nbli} does not depend on the choice of $\tilde\lambda$. Further, the set $\{n_{\breve\lambda}\; |\; \breve\lambda \in X_*(T)_{I_F}\}$ forms a group and is $\sigma$-stable.
\end{lemma}
\begin{proof}
Suppose $pr(\tilde\lambda') = \breve\lambda$. Then $\tilde\lambda -\tilde\lambda' \in X_*(T)(I_F)$.  Then $ \tilde\lambda -\tilde\lambda' = \sum_i \gamma_i(\tilde\mu_i) - \tilde\mu_i$ for $\gamma_i \in I_F$ and $\tilde\mu_i \in X_*(T)$. Then $\Nm_{\tF/\Fu} n_{\tilde\lambda -\tilde\lambda'} = \prod_i \Nm_{\tF/\Fu} n_{\gamma_i(\tilde\mu_i) - \tilde\mu_i}$. By Lemma \ref{tlGstableg}, we have $n_{\gamma_i(\tilde\mu_i)} = \gamma_i(n_{\tilde\mu_i})$. So \[\Nm_{\tF/\Fu}  n_{\gamma_i(\tilde\mu_i) - \tilde\mu_i} = 1\] for each $i$. So $\Nm_{\tF/\Fu} n_{\tilde\lambda} = \Nm_{\tF/\Fu} n_{\tilde\lambda'}$. Hence the definition of $n_{\breve\lambda}$ does not depend on the choice of $\tilde\lambda$. 

The set $\{n_{\breve\lambda}\; |\; \breve\lambda \in X_*(T)_{I_F}\}$ forms a group because the set $\{n_{\tilde\lambda}\; |\; \tilde\lambda \in X_*(T)\}$ forms a group. 

Next, we show that the set $\{n_{\breve\lambda}\; |\; \breve\lambda \in X_*(T)_{I_F}\}$ is $\sigma$-stable. Let $\breve\lambda \in X_*(T)_{I_F}$. Fix a lift $\tilde\sigma$ of $\sigma$ to $\Gamma_F$ and a $\tilde\lambda \in X_*(T)$ such that $\pr(\tilde\lambda) = \breve\lambda$. Then $\sigma(\breve\lambda) = \pr(\tilde\sigma(\tilde\lambda))$.  Further, 
\[\sigma(n_{\breve\lambda}) =  \Nm_{\tF/\Fu} \tilde\sigma(n_{\tilde\lambda}) = \Nm_{\tF/\Fu} n_{\tilde\sigma(\tilde\lambda)} =n_{\sigma(\breve\lambda)}.\]
Here, the second equality uses Lemma \ref{tlGstableg} and the third equality follows from \eqref{nbli}. 
\end{proof}
\subsection{A comparison lemma for tori over close local fields} We will use the notation set up in Section \ref{CY}.

\begin{lemma}\label{NMTCLF} Let $\cT$ as above and for $m \geq 1$, let $T_m  = \Ker(\cT(\fO_F) \rightarrow \cT(\fO_F/\fp_F^m))$. Let $e \geq m$ be as in Theorem \ref{MTCY}. If $F$ and $F'$ are $e$-close, we have an isomorphism \[\cT_m: T(F)/T_m \rightarrow T'(F')/T_m'.\]
\end{lemma}
\begin{proof} By Lemma \ref{tlsstable}, the exact sequence $1 \rightarrow T(F)_1 \rightarrow T(F) \xrightarrow{\kappa_{T,F}} X_*(T)_{I_F}^\sigma \rightarrow 1$ splits and we have an isomorphism of groups
 \begin{align}\label{torusiso}
T(F)_1 \times X_*(T)_{I_F} ^\sigma&\rightarrow T(F),\nonumber 
\end{align}
which induces an isomorphism
\[T(F)_1/T_m \times X_*(T)_{I_F} ^\sigma\rightarrow T(F)/T_m.\]
 Note that $\cT(\fO_F) = T(F)_1$. By the work of Chai-Yu (recalled as Theorem \ref{MTCY}), we have an isomorphism
\[\cT(\fO_F)/T_m \rightarrow \cT'(\fO_{F'})/T_m',\]
Since $T$ splits over an at most $m$-ramified extension of $F$, the action of $\Gamma_F$ on $X_*(T)$ factors through $\Gamma_F/I_F^m$. Since the action of $\Gamma_F/I_F^m$ on $X_*(T)$ is $\Del_m$-equivariant, we have $X_*(T)_{I_F}^\sigma \cong X_*(T)_{I_{F'}}^{\sigma'}$ via $\Del_m$. The lemma is proved. 
\end{proof}

\section{Section of the Kottwitz homomorphism for reductive groups with anisotropic adjoint groups}\label{KottwitzM}
We will now construct a nice set theoretic section of the Kottwitz homomorphism for connected, reductive groups whose adjoint group is anisotropic over $F$. When the group is itself adjoint, this will turn out to be a group theoretic section. We will then prove a comparison result over close local fields for such groups.

\subsection{Section of the Kottwitz homomorphism}\label{SKHM} By the classification theorem (see \cite{Kne, BT3}), we know that a connected, reductive group whose adjoint group is $F$-simple and anisotropic over $F$ is an inner form of a quasi-split connected, reductive group $M$ with $M_{\ad} \cong \Res_{L/F} \PGL_n$ for a suitable finite separable extension $L/F$. Let $\tF$ be the Galois closure of $L\bF$ in $F_s$.

Let $\sigma$ denote the quasi-split Frobenius action on $M_\Fu$ so that the $F$-structure it yields is $M$. Let $A$ be a maximal $F$-split torus in $M$, $S$ a maximal $\bF$-split $F$ torus containing $A$ and let $T = Z_M(S)$; then $T$ a maximal torus in $M$ defined over $F$.  Let $B$ be a Borel subgroup of $M$ containing $T$. We fix a set of $\sigma$-stable representatives $\{n_{\bl_\ad}\;|\; \bl_\ad \in X_*(T_\ad)_{I_F}\}$ and $\{n_{\bl}\;|\; \bl \in X_*(T)_{I_F}\}$; such a set of representatives exists by Lemma \ref{tlsstable}. The choice of $B$ gives a set of simple roots of $\tilde\Phi(M,T)$ whose restriction to $S$ gives a set of simple roots of $\breve\Phi(M,S)$, which we denote as $\breve\Delta(M,S)$. We also fix a system of pinnings $\{x_\ba\;|\; \ba \in \breve\Phi(M,S)\}$ that is $\sigma$-stable (see \cite[Section 4.1]{BT2}); such a system of pinnings exists since $M$ is quasi-split over $F$. For each $\ba \in \breve\Delta(M,S)$, let $n_{s_\ba} = x_\ba(1) x_{-\ba}(1) x_\ba(1) \in N_M(S)(\bF)$. For $\by = W(M,S)$, write $\by = s_{\ba_1} \cdots s_{\ba_k}$ where $\ba_i \in \breve\Delta(M,S)$. Set $n_\by = n_{s_{\ba_1}}\cdots n_{s_{\ba_k}}$. Then $n_\by \in N_M(S)(\bF)$ and is independent of the choice of reduced expression of $\by$. 

\begin{lemma}\label{TWMS} Let $j:M \rightarrow M_{\ad}$ be the adjoint quotient map. 
\begin{enumerate}
\item For $\breve\lambda \in X_*(T)_{I_F}$, and $\by \in W(M,S)$, we have $\by(\bl) - \bl = \by(\bl_\ad) - \bl_\ad$, where $\bl_\ad = j(\bl)$. 
\item For each $\bl_\ad \in X_*(T_\ad)_{I_F}$ and $\by \in W(M,S)$, we have $j(n_\by) n_{\bl_\ad} j(n_\by)^{-1} = n_{\by(\bl_\ad)} \in T_\ad(\bF)$. 
\item Let $t \in T(F_s)$ with $j(t) = n_{\bl_{\ad}}$.  Then for $\by \in W(M,S)$, $t n_\by t^{-1} n_\by^{-1} = n_{\bl_{\ad} - \by(\bl_\ad)} \in T(\bF)$.
\item For $\bl \in X_*(T)_{I_F}$ and $n_\bl$ as in Lemma \ref{tlsstable}, we have $n_\bl n_\by n_\bl^{-1} n_\by^{-1} = n_{\bl - \by(\bl)}$. 
\end{enumerate}
\end{lemma}
\begin{proof} 
For (a), simply note that $\by(\bl) - \bl$ and $\by(\bl_\ad) - \bl_\ad$ belong to $ X_*(T_{\sc})_{I_F} $ and $j$ acts as identity on $X_*(T_{\sc})_{I_F}$. 

Let us prove (b). Note that $M_{\ad, \bF} = M_{\ad,\Fu}^{(1)} \times  M_{\ad,\Fu}^{(2)}  \cdots \times M_{\ad,\Fu}^{(k)} $ where $ M_{\ad,\Fu}^{(1)} \cong  M_{\ad,\Fu}^{(2)}  \cdots \cong  M_{\ad,\Fu}^{(k)} \cong \Res_{L\Fu/\Fu} \PGL_n$. 
Further, $X_*(T_\ad)_{I_F} = X_*(T_{\ad}^{(1)})_{I_F} \times X_*(T_{\ad}^{(2)})_{I_F} \times X_*(T_{\ad}^{(k)})_{I_F}$ and $W(M,S) = W(M^{(1)}, S^{(1)}) \times \cdots W(M^{(k)}, S^{(k)})$ with each $W(M^{(i)}, S^{(i)}) \cong S_n$. So it suffices to prove the lemma for $\bl_\ad \in  X_*(T_{\ad}^{(i)})_{I_F}$ and $\by \in W(M^{(i)}, S^{(i)})$. 
Since the torus $T_\ad^{(i)}$ is induced, it admits a $\bbZ$-basis  $\{\gamma(\tilde\lambda_{\ad, 1}),\gamma( \tilde\lambda_{\ad, 2}) \cdots , \gamma(\tilde\lambda_{\ad,n-1})\;|\; \gamma \in I_F\}$ permuted by $I_F$ which then yields a basis $\{\bl_{\ad, 1}, \cdots , \bl_{\ad, n-1}\}$ of $X_*(T_{\ad}^{(i)})_{I_F}$. Note that for each $\by \in W(M,S)$ and each $\gamma \in I_F$, $\by$ stabilises the lattice $\cL_\gamma = \bbZ\langle \gamma(\tilde\lambda_{\ad, 1}),\gamma( \tilde\lambda_{\ad, 2}), \cdots , \gamma(\tilde\lambda_{\ad,n-1})\rangle$.  Also note that the $\cL_\gamma, \gamma \in \Gal(\tF/\bF)$, are disjoint.   
By Section \ref{Sec:IT}, we have for a suitable choice of $\varpi_{L\bF}$ that $n_{\tilde\mu_{\ad}} = \tilde\mu_{\ad}(\gamma(\varpi_{L\bF}))$ for every $\tilde\mu_{\ad} \in \cL_\gamma$ and every $\gamma \in I_F$. 
Further, since the set of representatives $ \{n_{\blambda_{\ad}}\;|\; \blambda_{\ad} \in X_*(T_{\ad})_{I_F}\}$ forms a group by Lemma \ref{tlsstable}, it suffices to check (b) for $\bl_{\ad, r}, 1 \leq r \leq n-1$. 
Then $j(n_\by) n_{\bl_{\ad,r}}j(n_\by)^{-1} =  j(n_\by)\left(\prod_{\gamma \in I} \gamma(\tilde\lambda_{\ad,r})(\gamma(\varpi_{L\bF})) \right)j(n_\by)^{-1}=  \prod_{\gamma \in I} \gamma(\by(\tilde\lambda_{\ad,r}))(\gamma(\varpi_{L\bF})) = n_{\by(\bl_{\ad,r})}$.

Let us prove (c). It suffices to prove (c) for a chosen $t \in T(F_s)$ with $j(t) = n_{\bl_\ad}$ since any other $t'$ with this property will differ from $t$ by an element of $Z(M)(F_s)$. Also, it suffices to prove (c) for $\bl_{\ad, r}, 1 \leq r \leq n-1$. The sequence $1 \rightarrow Z(M) \rightarrow M \rightarrow M_\ad \xrightarrow{j} 1$ induces an exact sequence
\[ X_*(Z(M))_{I_F} \rightarrow X_*(T)_{I_F} \xrightarrow{j} X_*(T_\ad)_{I_F}.\]
The group $X_*(T_\ad)_{I_F}/j(X_*(T)_{I_F})$ is finite. Let $k$ be the smallest positive integer such that $k \bl_{\ad,r} \in j(X_*(T)_{I_F})$. Let $\bl_r\in X_*(T)_{I_F}$ such that $j(\bl_r) =k \bl_{\ad,r}$. Note that $n_{\bl_{\ad,r}} =  \prod_{\gamma \in I} \gamma(\tilde\lambda_{\ad,r})(\gamma(\varpi_{L\bF}))$. Fix $k$-th roots of $\gamma(\varpi_{L\bF})$ in $F_s$ and set 
$t_r =  \prod_{\gamma \in I} \gamma(\tilde\lambda_{r})(\gamma(\varpi_{L\bF})^{1/k})\in T(F_s)$. Then $j(t_r) = n_{\bl_{\ad, r}}$. Now, 
$t_r n_\by t_r^{-1} n_\by^{-1} = \prod_{\gamma \in I} \gamma(\tilde\lambda_{r} - \by(\tilde\lambda_r))(\gamma(\varpi_{L\bF})^{1/k})$. Note that $\tilde\lambda_{r} - \by(\tilde\lambda_r) = k(\tilde\lambda_{\ad, r} - \by(\tilde\lambda_{\ad, r}))$ because $\tilde\lambda_{r} - \by(\tilde\lambda_r)$ and $ k(\tilde\lambda_{\ad, r} - \by(\tilde\lambda_{\ad, r}))$ belong to $X_*(T_{\sc})$ and $j$ acts as identity on $X_*(T_{\sc})$. 

Then  
\begin{align*}
t_r n_\by t_r^{-1} n_\by^{-1} &= \prod_{\gamma \in I} \gamma(\tilde\lambda_{r} - \by(\tilde\lambda_r))(\gamma(\varpi_{L\bF})^{1/k})\\
& =  \prod_{\gamma \in I} k\cdot\gamma(\tilde\lambda_{\ad, r} - \by(\tilde\lambda_{\ad,r}))(\gamma(\varpi_{L\bF})^{1/k})\\
& =  \prod_{\gamma \in I} \gamma(\tilde\lambda_{\ad, r} - \by(\tilde\lambda_{\ad,r}))(\gamma(\varpi_{L\bF}))\\
& = n_{\bl_{\ad} - \by(\bl_\ad)}.
\end{align*} 

Now (d) follows from (c), the fact that $j(n_\bl) = n_{\bl_\ad}$, and (a). 
\end{proof}

Let $\Omega_{\brM}  = X^*(Z(\hat M))_{I_F}$ and $\Omega_{\brM, \ad}  = X^*(Z(\hat M_\ad))_{I_F}$. We know that the $F$-isomorphism classes of inner twists of $M$ is parametrized by the pointed cohomology set $H^1(\Gamma_F/I_F, \Omega_{\brM, \ad}) \cong (\Omega_{\brM, \ad})_{\sigma}$. 

Now, $\Omega_{\brM, \ad} = \Omega_{\brM^{(1)}, \ad} \times  \Omega_{\brM^{(2)}, \ad }\cdots \times  \Omega_{\brM^{(k)}, \ad }$, $\sigma$ permutes these factors transitively, and $(\Omega_{\brM, \ad})_{\sigma} \cong \bbZ/n\bbZ$. Further, the natural surjection $ \Omega_{\brM^{(1)} , \ad} \rightarrow  (\Omega_{\brM, \ad})_{\sigma}$ is an isomorphism. 
Note that $\Omega_{\brM^{(1)}, \ad}  \hookrightarrow X_*(T_{\ad}^{(1)})_{I_F} \rtimes W(M^{(1)}, S^{(1)})$ and $W(M^{(1)},S^{(1)} )\cong S_n$. Let $\bnu_\ad  = t_{\breve\eta_\ad}\bz \in \Omega_{\brM^{(1)}, \ad} $ with $\bz =\bs_1 \cdots \bs_{n-1}$ (Here $M_\ad^{(1)}$ is adjoint of type $A_{n-1}$ and we have used the labelling of the finite simple roots as in Bourbaki \cite{Bou02}). The group we are interested in this section is determined by the image of $\bnu_\ad $ in  $ (\Omega_{\brM, \ad})_{\sigma}$. Let $\sigma^*$ be the Frobenius morphism associated to $\bnu_\ad$ as in Section \ref{Gan18IF}, and consider the group $M^* = M_\bF^{\sigma^*}$.

We will construct a section of the Kottwitz homomorphism $\kappa_{M^*, F}: M^*(F)\rightarrow \Omega_{M^*}$ (see Section \ref{KM} for notation). Let  $\Omega_M = \Omega_{\brM}^\sigma$ and $\Omega_{M^*} =\Omega_{\brM}^{\sigma^*}$. Similarly define  $\Omega_{M, \ad}$ and $\Omega_{M^*, \ad}$. 
\begin{lemma}\label{MM*} For $\btau \in \Omega_{\brM}$, we have $\sigma^*(\btau) = \sigma(\btau)$. In particular, $\Omega_M = \Omega_{M^*}$ and $\Omega_{M, \ad} = \Omega_{M^*, \ad} \cong \bbZ/n\bbZ$.
\end{lemma}
\begin{proof} Let $\btau=t_{\blambda} \by$. Then $\sigma^*(\btau) = t_{\sigma^*(\blambda)}t_{\bea - y'(\bea)}\by'$, where $\by'=Ad(\bz)(\sigma(\by))$. 
To prove that $\sigma^*(\btau) = \sigma(\btau)$, we need to prove that $\sigma^*(\blambda) + \bea - y'(\bea) = \sigma(\lambda)$. 
Let $\btau_\ad = j(\btau) = t_{\blambda_\ad} \by$. 
Then, since $\Omega_{\brM, \ad}$ is abelian, we have $\sigma^*(\blambda_\ad) - \sigma(\blambda_\ad)= \bea - y'(\bea)$ as elements of $X_*(T_\ad)_{I_F}$. 
Now, 
\begin{align*}
    \bea - \by'(\bea)&= \sigma^*(\blambda_\ad) - \sigma(\blambda_\ad)= \bz(\sigma(\blambda_\ad)) -\sigma(\blambda_\ad)= \bz(\sigma(\blambda))-\sigma(\blambda) = \sigma^*(\blambda) - \sigma(\blambda).
\end{align*}
In the above, the third equality is by Lemma \ref{TWMS}(a). This proves that $\sigma^*(\btau) = \sigma(\btau)$. The rest of the lemma is obvious. 
\end{proof}

The group $j(\Omega_{M}) \subset \Omega_{M, \ad}$ is cyclic. Let $\btau_0 \in \Omega_{\brM}$ be such that $j(\btau_0)$ is a  generator of $j(\Omega_{M})$. Then $j(\btau_0) = \bnu_\ad^r\sigma(\bnu_\ad)^r \cdots \sigma^{k-1}(\bnu_\ad)^r$ for a suitable $r$.  Write $\btau_0 = t_{\bl_0} \by_0 \in X_*(T)_{I_F} \rtimes W(M,S)$. Then $\by_0 = \bz^r \sigma(\bz^r) \cdots \sigma^{k-1}(\bz)^r$. Let $n_{\by_0} = n_\bz^r\sigma(n_\bz^r) \cdots \sigma^{k-1}(n_\bz^r)$. Set $n_{\btau_0} = n_{\bl_0} n_{\by_0}$. 
Since 
 $X_*(T_{\sc})_{I_F} \cap X_*(Z(M))_{I_F}$ is trivial, we get an exact sequence
\[ X_*(Z(M))_{I_F} \xrightarrow{\phi} \Omega_\brM \xrightarrow{j} \Omega_{\brM, \ad}\]
which then yields 
\[ X_*(Z(M))_{I_F}^{\sigma} \xrightarrow{\phi} \Omega_\brM^{\sigma} \xrightarrow{j} \Omega_{\brM, \ad}^{\sigma}.\]

Now, given $\btau \in \Omega_{M}$, there exist $\breve \mu \in X_*(Z(M))_{I_F}^\sigma$ and $s \in \bbZ$ divisible by $r$ such that $\btau = \phi(\breve \mu) +s\btau_0$. Set $n_{\btau} = \phi(n_\bmu) n_{\btau_0}^{s}$ where $n_\bmu \in Z(M)(\bF)^\sigma = Z(M)(F)$.

\begin{proposition}\label{GTS}
 Let $\btau \in \Omega_{M^*} = \Omega_M$. Then $\sigma^*(n_{\btau}) = n_{\btau}$. In particular, $n_{\btau} \in M^*(F)$ and $\tp: \Omega_{M^*} \rightarrow M^*(F), \btau \rightarrow n_{\btau},$ is a (set-theoretic) section of $\kappa_{M^*, F}$. If we additionally assume that $M^*$ is adjoint, then $\tp$ is a group-theoretic section.
\end{proposition} 
\begin{proof}It suffices to prove that $\sigma^*(n_{\btau_0}) = n_{\btau_0}$ since for $ \breve \mu \in X_*(Z(M))_{I_F}^\sigma$, $n_\bmu \in Z(M)(\bF)^\sigma$ and $\sigma^* = Ad(g_\bnu)\circ \sigma$ with $g_\bnu \in M(F_s)$. 

Now $\sigma^*(\btau_0) = \btau_0$ and $\sigma(\btau_0) = \btau_0$ imply that $\bz(\bl_0) + \be_\ad -\by_0(\be_\ad) = \bl_0$ and $\bz\by_0\bz^{-1} = \by_0$. Note that $\sigma(n_\btau) = n_\btau$ by Lemma \ref{tlsstable} and the construction of $n_\by$. Now $\sigma^*(n_{\btau_0}) = n_{\btau_0}$ if and only if $Ad(g_{\bnu})(n_{\bl_0} n_{\by_0}) = n_{\bl_0} n_{\by_0}$. By Lemma \ref{TWMS}, $Ad(g_{\bnu})(n_{\bl_0}) = n_{\bz(\bl_0)}$ and $Ad(g_{\bnu})( n_{\by_0})= n_{\be_\ad - \by_0(\be_\ad)} n_{\by_0}$. Hence \[Ad(g_{\bnu})(n_{\bl_0} n_{\by_0}) =  n_{\bz(\bl_0)}n_{\be_\ad - \by_0(\be_\ad)} n_{\by_0} = n_{\bz(\bl_0) +\be_\ad - \by_0(\be_\ad)}n_{\by_0} = n_{\bl_0}n_{\by_0},\] proving that $n_{\tau_0} \in M^*(F)$. Evidently $\kappa_{M^*, F}(n_{\btau}) = \btau$ for each $\btau \in \Omega_{M^*}$.

Before proving the last statement, we observe the following about the section $\tp: \Omega_{M^*} \rightarrow M^*(F)$. First note that $j(\btau_0)$ has order $n/r$ and hence $({n/r})\btau_0 = \phi(\bmu)$ for a suitable $\bmu \in X_*(Z(M))_{I_F}$.  and  $n_{\btau_0}^{n/r} = n_{\phi(\bmu)} n_{\by_0}^{n/r}= n_{\phi(\bmu)} n_\bz^n \sigma(n_\bz)^n \cdots \sigma^{k-1}(n_{\bz})^n$. The element $ n_\bz^n \sigma(n_\bz)^n \cdots \sigma^{k-1}(n_{\bz})^n = \ba^\vee(-1) \in M^*(F)_1$ for a suitable $\ba \in \breve\Phi(M,S)$.  One can explicitly calculate $\ba^\vee$ and note that $\ba^\vee(-1) \in Z(M^*)(F)$ (for example, this easily follows from \cite[Theorem C]{AH17}).  

Let $\btau_1, \btau_2 \in \Omega_{M^*}$. If $j(\btau_1+\btau_2) \neq 1$, we have by construction that $\tp(\btau_1+\btau_2) = \tp(\btau_1) \tp(\btau_2)$. If $j(\btau_1 +\btau_2)=1$, then $\tp(\btau_1+\btau_2) = \tp(\btau_1) \tp(\btau_2)(\ba^\vee(-1))^l$ for a suitable $l \geq 1$. If $M^*$ is adjoint then $\ba^\vee(-1)=1$ and hence the section $ \tp$ is a group-theoretic section. 
\end{proof}
\subsection{An isomorphism over close local fields} We keep the notation of Section \ref{SKHM}.  Then $M^*_{\der}$ is anisotropic over $F$ and $\cB(M^*,F)$ is a single point $x$. The subgroup $M^*(F)_1$ is the unique parahoric subgroup of $M^*(F)$ attached to the point $x$. Let us denote the underlying group scheme as $\cM^*$. More precisely, the generic fiber of $\cM^*$ is $M^*$ and $\cM^*(\fO_F) = M^*(F)_1$. Let $M_m^* = \Ker(\cM^*(\fO_F) \rightarrow \cM^*(\fO_F/\fp_F^m))$. Since $M^*(F)_1$ is the unique parahoric subgroup of $M^*(F)$, we see that $M_m^*$ is normal in $M^*(F)$.

Let $m\geq 1$ be such that $M$ splits over an at most $m$-ramified extension of $F$. Let $e>>m$ be as in Theorem \ref{PGCLFQ}, and let $F'$ be another non-archimedean local field that is $e$-close to $F$. Let $\sigma'$ be the corresponding quasi-split Frobenius morphism over $F'$ as in \ref{Gan18QS} and let $(M', T', B')$ be the corresponding groups over $F'$. Let $\breve\Delta(M',S')$ be the set of simple roots of $\breve\Phi(M',S')$ (determined by $B'$). We also fix a compatible system of pinnings $\{x_{\ba'}\;|\; \ba \in \breve\Phi(M',S')\}$ that is $\sigma'$-stable as in \cite[Section 4.A.1]{Gan18}. For each $\ba' \in \breve\Delta(M',S')$, let $n_{s_{\ba'}} = x_{\ba'}(1) x_{-\ba'}(1) x_{\ba'}(1) \in N_{M'}(S')(\bF')$. Using this, we obtain a representative $n_{\by'} \in N_{M'}(S')(\bF')$ for each $\by' \in W(M',S')$. 

We fix a set of $\sigma'$-stable representatives $\{n_{\bl_\ad'}\;|\; \bl_\ad' \in X_*(T_{\ad}')_{I_{F'}}\}$ and $\{n_{\bl'}\;|\; \bl' \in X_*(T')_{I_{F'}}\}$; such a set of representatives exist by Lemma \ref{tlsstable}. 

As recalled in Section \ref{Gan18IF}, we have 
\[\Omega_{\brM, \ad}\cong \Omega_{\brM', \ad}\]
Let $\bnu_\ad' \in \Omega_{\brM', \ad}$ be the image of $\bnu_\ad$ under this isomorphism.  Write $\bnu_\ad'  = t_{\breve\eta_\ad'}\bz' \in \Omega_{\brM'^{(1)}, \ad} $ with $\bz' =\bs_1' \cdots \bs_n'$. Let $\sigma'^*$ be the corresponding Frobenius morphism associated to $\bnu_\ad'$ and let $M'^* = M_{\bF}'^{\sigma'^*}$.

We have the following proposition.
\begin{proposition}\label{MmCLF} For $m \geq 1$, let $e \geq m$ be as in Theorem \ref{PGCLFI}. If $F$ and $F'$ are $e$-close, we have an isomorphism
\[M^*(F)/M_m^* \xrightarrow{\cong} M'^*(F')/M_m'^*.\]
\end{proposition}

\begin{proof} By Theorem \ref{PGCLFI} we have
\[M^*(F)_1/M_m^* \cong M'^*(F')_1/M_m'^*.\]
Further, by Lemma \ref{MM*}, we have $\Omega_M = \Omega_{M^*}$, and since the isomorphisms $X_*(T) \rightarrow X_*(T')$ and $X_*(T_{\sc}) \rightarrow X_*(T_{\sc}')$ are $\Del_m$-equivariant (see \cite[Lemma 3.4]{Gan18}), we have $\Omega_M \cong \Omega_{M'}$. Hence
\[ \Omega_{M^*} \cong \Omega_{M'^*}.\]
We identify these groups via these isomorphisms, and, to prove the lemma, we need to prove that the group extensions
\[
\begin{tikzcd}
 & M^*(F)/M^*_m\arrow[dr,"\phi"] \\
1\rightarrow M^*(F)_1/M^*_m \arrow[ur] \arrow[dr] &&\Omega_{M^*} \rightarrow 1\\
&M'^*(F')/M_m'^*\arrow[ur,"\phi'"]
\end{tikzcd}
\]
are equivalent.
To do this, it suffices to show that there exist set-theoretic sections $p:\Omega_{M^*} \rightarrow M^*(F)/M_m$ and $p':\Omega_{M'^*} \rightarrow M'^*(F')/M_m'^*$ such that 
\begin{enumerate}
\item with $\psi = Inn \circ p$ and $\psi' = Inn \circ p'$ from $\Omega_{M^*}  \rightarrow Aut(M^*(F)_1/M_m^*)$, we have $\psi = \psi'$,
\item with $\chi, \chi': \Omega_{M^*}  \times \Omega_{M^*}   \rightarrow M^*(F)_1/M_m^*$ given by $\chi(\btau_1,\btau_2) = p(\btau_1+\btau_2)p(\btau_2)^{-1} p(\btau_1)^{-1}$ and $\chi'(\btau_1,\btau_2) = p'(\btau_1+\btau_2)p'(\btau_2)^{-1} p'(\btau_1)^{-1}$, we have $\chi = \chi'$.
\end{enumerate}
Consider the set theoretic section $\Omega_{M^*} \xrightarrow{\tp} M^*(F)$ in Proposition \ref{GTS} and let $p$ be the projection of this section to $M^*(F)/M_m^*$. Let $\btau_0'$ be the image of $\btau_0$ under the isomorphism $\Omega_{M^*} \rightarrow \Omega_{M'^*}$. Then $j(\btau_0')$ generates $j(\Omega_{M'}) \subset \Omega_{M', \ad}$. With $r$ as in  the paragraph preceding Proposition \ref{GTS},  we have $j(\btau_0') = \bnu_\ad'^r\sigma'(\bnu_\ad')^r \cdots \sigma'^{r-1}(\bnu_\ad')^r$.  Write $\btau_0' = t_{\bl_0'} \by_0' \in X_*(T')_{I_F} \rtimes W(M',S')$. Note that under the isomorphism $X_*(T)_{I_F} \rightarrow X_*(T')_{I_{F'}}$, $\bl_0 \rightarrow \bl_0'$. Further $\by_0' = \bz'^r \sigma'(\bz'^r) \cdots \sigma'^{r-1}(\bz')^r$. 
Let $n_{\by_0'} = n_{\bz'}^r\sigma'(n_{\bz'}^r) \cdots \sigma'^{r-1}(n_{\bz'}^r)$. Set $n_{\btau_0'} = n_{\bl_0'} n_{\by_0'}$. 
Given $\btau' \in \Omega_{M'^*}$, we may write $\btau' = \phi'(\breve \mu') +s\btau_0'$ where $\breve\mu'$ maps to $\breve \mu$ under the isomorphism $X_*(Z(M'))_{I_{F'}}^{\sigma'} \cong X_*(Z( M))_{I_F}^\sigma$. Set $n_{\btau'} = \phi(n_{\bmu'}) n_{\btau_0'}^{s}$ where $n_\bmu \in Z(M')(\bF')^{\sigma'}$. Again by Proposition \ref{GTS}, $n_{\btau'} \in M'^*(F')$. We have constructed a section $\tp': \Omega_{M'^*} \rightarrow M'^*(F')$. Let $p'$ be the projection of this section to $M'^*(F')/M_m'^*$. 

Now let us prove that the sections $p$ and $p'$ constructed in the preceding paragraph satisfy (a) and (b). 

To see (a), it suffices to prove that 
\[\begin{tikzcd}
M^*(F)_1/M_m^*\arrow{r}{\cong} \arrow{d}{Inn(n_{\btau_0})}
&M'^*(F')_1/M_m'^*\arrow{d}{Inn(n_{\btau_0'})}\\
M^*(F)_1/M_m^*\arrow{r}{\cong} &M'^*(F')_1/M_m'^*
\end{tikzcd}\]
is commutative. Note that \[(\sigma^*)^k = Ad(g_\bnu \sigma(g_\bnu) \cdots \sigma^{k-1}(g_\bnu)) \circ \sigma^k = Ad(n_{\bnu_\ad} \sigma(n_{\bnu_\ad}) \cdots \sigma^{k-1}(n_{\bnu_\ad})) \circ \sigma^k\] and hence $(\sigma^*)^{rk} = Ad(j(n_{\btau_0})) \circ \sigma^{rk} =  Ad(n_{\btau_0}) \circ \sigma^{rk}$. Since $\sigma^*$ (and hence $(\sigma^*)^{rk}$) fixes $M^*(F)_1$ pointwise, and since $n_{\btau_0} \in M^*(F)$ (and hence normalizes $M^*(F)_1$), it follows that $\sigma^{rk}$ also stabilizes $M^*(F)_1$. Hence, to prove that the above diagram is commutative, it suffices to prove that the following diagram 
\begin{equation}\label{MMm*}
\begin{tikzcd}
M^*(F)_1/M_m^*\arrow{r}{\cong} \arrow{d}{\sigma^{rk}}
&M'^*(F')_1/M_m'^*\arrow{d}{\sigma'^{rk}}\\
M^*(F)_1/M_m^*\arrow{r}{\cong} &M'^*(F')_1/M_m'^*
\end{tikzcd}
\end{equation}
is commutative. Let $\bP$ be a parahoric subgroup of $M(\bF)$($=M^*(\bF)$) and let $\bP'$ be the corresponding parahoric subgroup of $M'(\bF')$ (see Section \ref{Gan18QS}). Then by \cite[Theorem 4.5]{Gan18}, we have that $\bP/\bP_m \cong \bP'/\bP'_m$,  and by \cite[Proposition 4.10]{Gan18}, this isomorphism is $\sigma$-equivariant. Now, using the facts that $\bP \cap M^*(F) = M^*(F)_1$, $\bP_m \cap M^*(F) = M^*_m$ and similarly that $\bP' \cap M'^*(F') = M'^*(F')_1$, $\bP'_m \cap M'^*(F') = M'^*_m$, the commutativity of diagram \ref{MMm*} follows. This finishes the proof of (a).

Let us prove (b). As noted in the proof of \ref{GTS}, the element $ n_\bz^n \sigma(n_\bz)^n \cdots \sigma^{k-1}(n_{\bz})^n = \ba^\vee(-1) \in Z(M^*)(F) \cap M^*(F)_1$. Further, for $\btau_1, \btau_2 \in \Omega_{M^*}$, we have  $\tp(\btau_1+\btau_2) = \tp(\btau_1) \tp(\btau_2)$ if $j(\btau_1+\btau_2) \neq 1$, and $\tp(\btau_1+\btau_2) = \tp(\btau_1) \tp(\btau_2)(\ba^\vee(-1))^l$ for a suitable $l$ if $j(\btau_1 +\btau_2)=1$.  
Now, under the isomorphism $\Omega_{M^*} \rightarrow \Omega_{M'^*}$, $(n/r)\btau_0 \rightarrow (n/r)\btau_0'$ and hence $n_{\btau_0'}^{n/r} = n_{\phi'(\bmu')} n_{\by_0'}^{n/r}= n_{\phi'(\bmu')} n_{\bz'}^n \sigma(n_{\bz'})^n \cdots \sigma^{k-1}(n_{\bz'})^n$. With $\ba \rightarrow \ba'$ under the isomorphism $\breve\Phi(M,S) \rightarrow \breve\Phi(M', S')$, we have $ n_{\bz'}^n \sigma(n_{\bz'})^n \cdots \sigma^{k-1}(n_{\bz'})^n = \ba'^\vee(-1)$. In particular, $\ba^\vee(-1) \mod M^*_m \rightarrow \ba'^\vee(-1) \mod M'^*_m$. Hence the sections $p,p'$ satisfy (b).

This finishes the proof of the proposition.
\end{proof}

\section{The Kazhdan isomorphism for general connected reductive groups}\label{MT}
Let $G^*$ be a connected reductive group over $F$, $S^*$ a maximal $\Fu$-split, $F$-torus in $G^*$ and $A^*$ a maximal $F$-split torus in $G^*$ contained in $S^*$. Assume $F'$ is $e$-close to $F$ where $e$ is as in Theorem \ref{PGCLFI}. Let $(G'^*,S'^*,A'^*)$ correspond to $(G^*,S^*,A^*)$ as explained in Section \ref{Gan18IF} and let $\cA_m^*:\cA(S^*,\Fu) \rightarrow \cA(S'^*,\Fu')$ be the $\sigma^*$-equivariant simplicial isomorphism discussed there. 

 Let $M^*= C_{G^*}(A^*)$. Let $v$ denote a special vertex in the closure of the alcove $\bcC^{\sigma^*}$ of the apartment $\cA(A^*,F)$. Let $K^*$ denote the parahoric subgroup of $G^*(F)$ attached to this special vertex. 

Let $\cK^*$ be the smooth affine $\fO_F$-group scheme underlying $K^*$ and let $K^*_m = \Ker(\cK^*(\fO_F) \rightarrow \cK^*(\fO_F/\fp_F^m))$.  Let $v'$ denote the special vertex of $\cA(A'^*,F')$ corresponding to $v$ (under the isomorphism $\cA_m^*: \cA(A^*, F)\rightarrow \cA(A'^*, F')$) and $\cK'^*, K'^*, K_m'^*$ be the corresponding objects over $F'$.

Consider the Hecke algebra $\cH(G^*(F), K_m^*)$. The goal of this section is to prove the following theorem.

\begin{theorem}\label{MainTheorem} Let $m\geq 1$ and let $e\geq m$ as in Theorem \ref{PGCLFI}. There exists $l \geq e$ such that for any two non-archimedean local fields $F$ and $F'$ that are $l$-close, there is a map
\[h_m^*: \cH(G^*(F), K_m^*) \rightarrow \cH(G'^*(F'), K_m'^*)\]
that is an isomorphism of $\bbC$-algebras. 
\end{theorem}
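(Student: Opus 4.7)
The plan is to adapt Kazhdan's strategy from Section~\ref{KazhdanIsomorphism} to the present setting, relying on three ingredients already in place: (a) the Cartan decomposition for the pair $(G(F), K)$ of \cite{HR10}; (b) the identification $K/K_m \cong K'/K_m'$ from Theorem~\ref{PGCLFI}; and (c) the finite presentation of $\cH(G(F), K_m)$ from \cite{Ber84}.

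First, I would make the Cartan decomposition explicit. Since $v$ is a special vertex, one has
\[
G(F) = \coprod_{\lambda \in W_0 \backslash \Lambda_M^-} K \, n_\lambda \, K,
\]
where $\Lambda_M$ is as in \eqref{lambdam}, the superscript $-$ selects an antidominant cone with respect to \eqref{pairing}, $W_0 = N_G(A)(F)/M(F)$ is the relative Weyl group, and $n_\lambda \in M(F)$ is a chosen lift. The construction recalled in Section~3 canonically identifies $\Lambda_M \cong \Lambda_{M'}$ and $W_0 \cong W_0'$, and compatible lifts $n_\lambda$ and $n_\lambda'$ can be arranged because the $\Gal(\Fu/F)$-equivariant structure on the apartment transports under $\cA_m$.

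Second, I would define $\cH_m$ on the basis $t_x = \vol(K_m; dg)^{-1} \Char(K_m x K_m)$ exactly as in Kazhdan's construction. For each $\lambda \in W_0 \backslash \Lambda_M^-$, the set of $K_m$-double cosets inside $K n_\lambda K$ is a homogeneous space for the finite group $(K/K_m) \times (K/K_m)$ under the action $(a,b)\cdot x = a x b^{-1}$, with stabilizer $\Gamma_\lambda$ of $K_m n_\lambda K_m$. By Theorem~\ref{PGCLFI}, the identification $(K/K_m)\times(K/K_m) \cong (K'/K_m')\times(K'/K_m')$ carries $\Gamma_\lambda$ to $\Gamma_\lambda'$, so a set of representatives $T_\lambda$ transports to $T_\lambda'$, and the prescription $t_{a_i n_\lambda a_j^{-1}} \mapsto t_{a_i' n_\lambda' a_j'^{-1}}$ yields a $\bbC$-linear bijection $\cH_m: \cH(G(F), K_m) \to \cH(G'(F'), K_m')$.

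The main task, and the main obstacle, is to verify that $\cH_m$ is multiplicative. Here I would follow the Kazhdan argument: by \cite{Ber84}, the algebra $\cH(G(F), K_m)$ is finitely presented, so it suffices to exhibit a finite set of generators $t_{x_1}, \ldots, t_{x_r}$, verify the finitely many relations $R_1, \ldots, R_s$, and check that these data transport to $F'$. Each $x_i$ lies in some $K n_\lambda K$ with $\lambda$ in a finite set $\Sigma \subset W_0 \backslash \Lambda_M^-$, and each product $t_{x_i}\cdot t_{x_j}$ is supported on double cosets $K_m z K_m$ with $z \in K n_\mu K$ for $\mu$ in a finite set $\Sigma'$ determined by $\Sigma$. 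The structure constants $c_{ij}^z$ in
\[
t_{x_i}\cdot t_{x_j} = \sum_z c_{ij}^z \, t_z
\]
count $K_m$-cosets whose product lies in a prescribed double coset, and by $K_m$-biinvariance together with the Iwahori factorization at $v$, these counts depend only on the reduction of the parahoric group scheme $\cK$ modulo $\fp_F^l$ and on the truncated valuations of the chosen $n_\lambda$, for a sufficiently large $l=l(m,\Sigma,\Sigma')$. Theorem~\ref{PGCLFI} then guarantees $c_{ij}^z = c_{ij}^{z'}$ whenever $F$ and $F'$ are $l$-close, so the full finite list of relations transports and $\cH_m$ is an isomorphism of $\bbC$-algebras.

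The hardest part will be the uniform bound on $l$. Although each individual convolution involves only bounded data, one must choose $l$ simultaneously for all chosen relations and ensure that the level-$m$ identifications supplied by Theorem~\ref{PGCLFI} are fine enough to recover the convolution combinatorics inside each $K n_\mu K$ with $\mu \in \Sigma'$. I would carry this out along the lines of \cite{kaz86}, using $K_m$-biinvariance to confine the support of each relevant convolution to a bounded neighborhood in the apartment $\cA(A, F)$, and then translating the resulting finite combinatorial data across $F$ and $F'$ via the simplicial isomorphism $\cA_m$ and the parahoric reductions of Theorem~\ref{PGCLFI}.
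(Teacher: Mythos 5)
Your approach tracks the paper's proof closely (which in turn follows Kazhdan's original strategy): exploit the Cartan decomposition of \cite{HR10}, the identification $K/K_m \cong K'/K_m'$ from Theorem~\ref{PGCLFI} and the stabilizers $\Gamma_\lambda \to \Gamma_{\lambda'}$ to build the linear bijection, and then pass through Bernstein's finite presentation result to control multiplicativity with a single bound $l$. The paper's Lemma~\ref{nC} is the precise form of your claim that structure constants in a bounded region are determined by level-$n$ congruence data for $n=n_\cC$ depending only on the bounded region.

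However, your concluding step elides a genuine issue. Transporting the finitely many relations shows the assignment $x_i \mapsto \cH_m(h(a_i))$ factors through the presentation, yielding an algebra homomorphism $\tilde h : \cH(G(F),K_m) \to \cH(G'(F'),K_m')$ that agrees with your linear bijection $\cH_m$ on the \emph{generators}. But it does not yet follow that $\tilde h = \cH_m$ on the entire basis $X = K_m\backslash G(F)/K_m$, which is what you need to conclude that $\cH_m$ itself is multiplicative (equivalently, that $\tilde h$ is bijective). The paper closes this gap with two structural inputs that your argument lacks. First, a convolution identity at a special maximal parahoric level, namely $h(m_\lambda)*h(m_\mu) = h(m_\lambda m_\mu)$ and $h(k m_\lambda k') = h(k)*h(m_\lambda)*h(k')$ for $k,k'\in K$ (Proposition~\ref{simpleconv}, via Howe \cite{How85} and the volume formula). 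Second, a finite generating set of a specific form: coset representatives $S_K$ of $K/K_m$ together with $h(m_\lambda)$ for $\lambda$ in a finite subset $\cC_0$ of antidominant elements generating the semigroup $W(G,A)\backslash \Lambda_M$ (Corollary~\ref{generators}). Using these, every basis element $h(y)$ is a controlled word $h(g_1)*h(m)*h(m_{\lambda_1})*\cdots*h(m_{\lambda_k})*h(g_2)$ in the generators, with the same word over $F'$ under the transport, and this is what forces $\tilde h = \cH_m$ on all of $X$. Without the convolution identity (or an equivalent), the step ``relations transport, hence $\cH_m$ is an isomorphism'' does not follow.
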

In the special case where $G^*$ is an inner form of $\GL_n$, this theorem is due to Badulescu (see \cite{Bad02}). 

In \cite{HR10}, Haines and Rostami establish the Cartan decomposition of $G^*(F)$ with respect to $K^*$.
\begin{theorem}[Theorem 1.0.3 of \cite{HR10}]\label{CD}We have a bijection
\[K^*\backslash G^*(F)/K^* \rightarrow W(G^*,A^*) \backslash \Omega_{M^*}.\]
\end{theorem}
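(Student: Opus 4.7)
My plan is to derive the Cartan bijection from the Iwasawa decomposition of $G(F)$ with respect to $K$, and then to identify the resulting quotient of $M(F)$ via the Kottwitz homomorphism. First I would fix a minimal $F$-parabolic $P = MU$ with Levi $M = C_G(A)$. Because $v$ is a \emph{special} vertex of $\cA(A,F)$, Bruhat--Tits theory yields the Iwasawa decomposition
\[G(F) = K \cdot M(F) \cdot U(F),\]
together with the factorization $K \cap P(F) = (K \cap M(F)) \cdot (K \cap U(F))$ and the identification $K \cap M(F) = M(F)_1$ (the unique maximal bounded subgroup of $M(F)$, which equals $\ker \kappa_M^F$ since $M$ is a minimal Levi, so $\Lambda_M$ is free abelian).

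Given the Iwasawa decomposition, I would define $\Phi : G(F) \to W(G,A) \backslash \Lambda_M$ by sending $g = k m u$ to the class of $\bar m$, the image of $m$ in $\Lambda_M = M(F)/M(F)_1$. To prove $\Phi$ is well defined, note that if $k_1 m_1 u_1 = k_2 m_2 u_2$ then $k_2^{-1} k_1 \in K \cap P(F)$; applying the special-vertex factorization above forces $m_2^{-1} m_1 \in M(F)_1$, so $\bar m_1 = \bar m_2$. The map is unchanged under left- or right-multiplication by $K$ because $M(F)_1 \subseteq K$, so it descends to $K \backslash G(F) / K$. Since $v$ is special, every element of $W(G,A) = N_G(A)(F)/M(F)$ has a representative in $K \cap N_G(A)(F)$ (this is precisely the defining property of a special vertex), and thus $\Phi$ further descends modulo $W(G,A)$.

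Finally I would verify bijectivity. Surjectivity is immediate: any $\lambda \in M(F) \subseteq G(F)$ itself satisfies $\Phi(\lambda) = W(G,A) \cdot [\lambda]$. For injectivity, if $\Phi(KgK) = \Phi(Kg'K)$ then the Iwasawa $M$-parts $m,m'$ satisfy $m' = w m z$ for some lift $w$ of an element of $W(G,A)$ to $K \cap N_G(A)(F)$ and some $z \in M(F)_1 \subseteq K$; hence $Kg'K = Km'K = KwmzK = KmK = KgK$. Composing $\Phi$ with the $W(G,A)$-equivariant Kottwitz isomorphism $\Lambda_M \xrightarrow{\sim} X^*(Z(\hat M))^\sigma_I$ from \eqref{lambdam} then yields the claimed bijection. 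The main technical obstacle is establishing the special-vertex factorization $K \cap P(F) = (K \cap M(F)) \cdot (K \cap U(F))$ together with $K \cap M(F) = M(F)_1$; both genuinely require the special-vertex hypothesis on $v$, since without it one only obtains an Iwahori--Weyl-type parametrization in which an affine Weyl group appears in place of $W(G,A)$.
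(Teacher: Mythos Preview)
The paper does not give its own proof of this statement: it is quoted as Theorem~1.0.3 of \cite{HR10} and used as a black box. So there is no in-paper argument to compare against. That said, your proposed argument has a genuine gap.

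The step ``the map is unchanged under left- or right-multiplication by $K$ because $M(F)_1 \subseteq K$'' is false for right multiplication, and the same error reappears in your injectivity argument when you write $Kg'K = Km'K$. From $g = kmu$ you only get $KgK = KmuK$, and in general $muK \neq mK$ because $U(F) \not\subset K$. Concretely, for $G = \GL_2$, $K = \GL_2(\fO_F)$, the element
\[
g = \begin{pmatrix} 1 & \pi^{-1} \\ 0 & 1 \end{pmatrix} \in U(F)
\]
has Iwasawa $M$-part equal to $1$, so your $\Phi$ sends it to the trivial class; yet $g \notin K$ and in fact $KgK = K\,\mathrm{diag}(\pi^{-1},1)\,K$, a nontrivial Cartan class. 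Thus $\Phi$ does not descend to $K\backslash G(F)/K$ at all, and the argument collapses. More generally, the Iwasawa $M$-parts of the elements of a single double coset $KgK$ range over many $\Lambda_M$-classes (this is exactly what the Satake transform records via inequalities of the form $\mu \preceq \lambda$), so one cannot read off the Cartan invariant from a single Iwasawa factorization.

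The actual proof in \cite{HR10} does not go through Iwasawa. It uses the Bruhat--Tits decomposition $G(F) = K \,\tW\, K$ with respect to the Iwahori--Weyl group $\tW$, together with the identification of $K\backslash \tW /K$ (for a special parahoric $K$) with $W(G,A)\backslash \Lambda_M$; the point is that the affine Weyl group $W_{\mathrm{aff}}$ sits inside $K$ when $v$ is special. If you want to salvage an Iwasawa-flavored approach, you would need to prove directly that for $m \in M(F)$ and $u \in U(F)$ one has $KmuK = Km^*K$ for some $m^*$ with $\bar m^*$ in the $W(G,A)$-orbit of $\bar m$ --- but this is essentially equivalent to the Cartan decomposition itself and cannot be deduced from the inclusion $M(F)_1 \subset K$ alone.
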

 We have the following proposition.
 \begin{proposition}\label{CWG} Suppose $F$ and $F'$ are $e$-close, where $e$ is as in Theorem \ref{PGCLFI}. With $(G^*,S^*,A^*) \rightarrow (G'^*,S'^*,A'^*)$ as in Section \ref{Gan18IF}, arising from data $(G,T,B) \rightarrow (G',T',B')$, we have isomorphisms
 \begin{enumerate}
 \item $W(G^*,A^*) \cong W(G'^*,A'^*)$.
 \item With $M'^* = C_{G'^*}(A'^*)$, we have $\Omega_{M^*} \cong\Omega_{M'^*}$. 
 \item With $K'^*$ corresponding to $K^*$ as above, we have a bijection
 \[ K^* \backslash G^*(F)/K^* \rightarrow K'^*\backslash G'^*(F') /K'^*.\]
 \end{enumerate}
 \end{proposition}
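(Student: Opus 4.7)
The plan is to deduce all three parts from the observation that the combinatorial data parametrizing $G$ and $G'$ agree via the identifications constructed in \cite{Gan18}, and then to invoke the Cartan decomposition of Theorem \ref{CD} for part (3). Recall that $G$ is an inner form of the quasi-split form $G_q$, which is obtained from the pinned split group $G_0$ by twisting through a class in $H^1(\Gamma_F/I_F^m, \Aut(R,\Delta))$, and that $G'$ corresponds to $G$ via transport of cocycles through the Deligne isomorphism (Lemmas 3.3 and 5.1 of \cite{Gan18}). Consequently, invariants of $G$ that are defined purely in terms of the based root datum together with a $\sigma$-action are automatically identified with the corresponding invariants of $G'$.

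For (1), I would use the standard identification $W(G,A) \cong W(G,S)^{\sigma}$, where $W(G,S) := N_G(S)(\Fu)/Z_G(S)(\Fu)$ is canonically the Weyl group $W(R)$ of the absolute root datum (inner forms share the same absolute Weyl group as the underlying quasi-split form). The $\sigma$-action on $W(R)$ is read off the cocycle class in $H^1(\Gamma_F/I_F^m, \Aut(R,\Delta))$, and since the same class, transported by $\Del_m$, parametrizes $G_q'$, the two actions coincide; taking invariants gives $W(G,A) \cong W(G',A')$. For (2), the Kottwitz isomorphism \eqref{lambdam} reduces the assertion to identifying $X^*(Z(\hat M)^I)^\sigma$ with $X^*(Z(\hat M')^{I'})^{\sigma'}$. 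Since the root datum of $M = C_G(A)$ is a sub-datum of that of $G$ and the relevant Galois action on it is determined by the same cocycle data (the action on $\hat M$ factoring through $\Gamma_F/I_F^m$ for $m$ sufficiently large), this identification follows in the same way. For (3), I combine (1), (2), and Theorem \ref{CD}:
\[
K \backslash G(F)/K \,\cong\, W(G,A) \backslash X^*(Z(\hat M))^\sigma_I \,\cong\, W(G',A') \backslash X^*(Z(\hat{M'}))^{\sigma'}_{I'} \,\cong\, K' \backslash G'(F')/K'.
\]

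The main obstacle will be confirming that each of these identifications genuinely intertwines the $\sigma$- and $\sigma'$-actions on Weyl groups and on $X^*(Z(\hat M)^I)$. This compatibility is implicit in the construction of \cite{Gan18}, which uses the Deligne equivalence $ext(F)^m \to ext(F')^m$ to arrange the cocycles parametrizing both the quasi-split form and the inner twist to correspond across the two fields, but one must trace carefully through Sections 3 and 5 of loc.\,cit.\ to verify that this correspondence preserves the auxiliary combinatorial structures invoked above (in particular, the $\sigma$-equivariance of the identifications of character and cocharacter lattices, and of the absolute Weyl group). Once this bookkeeping is done, (1) and (2) are essentially formal, and (3) follows immediately from the Cartan decomposition.
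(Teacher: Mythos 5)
Your overall strategy — transport the combinatorial data across the fields via the Deligne isomorphism and invoke the Cartan decomposition for (3) — is the same as the paper's, but part (1) contains a concrete error that the paper's argument is specifically designed to avoid. You assert the identification $W(G,A) \cong W(G,S)^\sigma$, which is valid when $G$ is \emph{quasi-split} (there $M=T$ is a torus and the relevant action is the untwisted $\sigma$), but fails for general inner forms. The correct identification, used in the paper, is Lemma 6.1.2 of \cite{HR10}:
\[
W(G,A) \cong \bigl(W(G,S)/W(M,S)\bigr)^{\sigma_*},
\]
where $M = C_G(A)$ and $\sigma_*$ is the action twisted by the inner-twist cocycle. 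Both the quotient by $W(M,S)$ and the passage from $\sigma$ to $\sigma_*$ are essential: for instance, if $G$ is an anisotropic inner form of a split group, $A$ is trivial and $W(G,A)=1$, whereas $W(G,S)^\sigma$ can be large. Your proposal also reads the relevant action off only the cocycle in $H^1(\Gamma_F/I_F^m,\Aut(R,\Delta))$ parametrizing the quasi-split form, ignoring the contribution of the inner twist $w_\sigma = (\lambda,w)$, which is exactly the part that must be tracked.

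This omission propagates into (2). You say the assertion for $\Lambda_M$ follows because ``the root datum of $M$ is a sub-datum of that of $G$ with the same Galois action,'' but identifying which sub-datum it is — and verifying that it corresponds across the two fields — is the heart of the matter. The paper does this by passing to the quasi-split side: it shows that $A$ transports to an $F$-split subtorus $\tA_q \subset A_q$ (the split component of $S_q$) cut out by $\ker(w\sigma - 1)$ on $X_*(S_q)$, identifies $M$ as an inner form of the quasi-split Levi $M_q = C_{G_q}(\tA_q)$, and then shows $M_q \to M_q'$ under the correspondence because $A_q \to A_q'$ (using that $X_*(S_q)\to X_*(S_q')$ is both $\sigma$-equivariant and $W(G_q,S_q)$-equivariant). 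Once this is in place, the $\sigma_*$-equivariance of $W(G,S)\cong W(G',S')$ and $W(M,S)\cong W(M',S')$ gives (1), the Kottwitz isomorphism \eqref{lambdam} gives (2), and Theorem \ref{CD} gives (3), as you intended. So your plan for (3) is fine, but (1) and (2) as written contain a gap: you need the HR10 formula for the relative Weyl group of an inner form, and you need to actually establish that $M_q$ corresponds to $M_q'$ rather than treat it as automatic.
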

 \begin{proof} We begin with a general observation. By \cite[Lemma 6.1.2]{HR10}, we have
 \[W(G^*,A^*) \cong \left(W(G^*,S^*)/W(M^*,S^*)\right)^{\sigma^*}.\]
Let $A_{0}$ be the torus in $G_\bF$ that is the image 
 of $A$ under the isomorphism $G^*_\bF \rightarrow G_\bF$. Let $M = C_G(A_{0})$. We claim that $A_{0}$ is $F$-split, that $M$ is an $F$-Levi subgroup of $G$, and $M^*$ is an inner twist of $M$. Let $P^*$ be the minimal $F$-parabolic subgroup with Levi component $M^*$ such that with $P$ denoting the image of $P^*_\bF$ under the isomorphism $G^*_\bF \rightarrow G_\bF$, $P$ contains $B$.  Note that apriori, $P$ is defined over $\bF$. We have $T \subset B \subset \sigma(P)$ since $T, B$ are $\sigma$-stable. Now since $\sigma(P)$ and $P$ are $G(F_s)$-conjugate and both contain $B$ we have $\sigma(P)=P$, $\sigma(M) = M$ and hence $P, M$ are defined over $F$. Since $P^*$ and  $M^*$ is defined over $F$, we have $\sigma^*(P^*) = P^*$ and $ \sigma^*(M^*) = M^*$. This implies that $\sigma(P_\bF) = \Ad(n_\bz^{-1})(P_\bF), \sigma(M_\bF) = \Ad(n_\bz^{-1})(M_\bF)$, where $n_\bz$ is as in Section \ref{Gan18IF}.  This implies that $n_\bz^{-1}$ normalizes $P$ and $M$ and hence $n_\bz^{-1} \in M(F_s)$. 
 
 Note that $X_*(A_{0}) = \{\bl \in X_*(S)\;|\; \bz(\sigma(\bl)) = \bl\}$. Since $n_\bz^{-1} \in M(F_s)$ and $M$ centralizes $A_0$ we see that for each $\bl \in X_*(A_0)$, $\bz(\bl) = \bl$. Hence $X_*(A_{0}) = \{\bl \in X_*(S)\;|\; \bz(\bl) = \bl,\,\; \sigma(\bl) = \bl\}$. In particular $A_0$ is $F$-split and $A_0 \subset A$. With $M'^* = C_{G'^*}(A'^*)$, and with $A_0', M'$ analogous objects over $F'$, we again have that $A_0'$ is $F'$-split, $M' = C_{G'}(A')$ is an $F'$-Levi subgroup of $G$ and $X_*(A_0') =\{\bl' \in X_*(S')\;|\; \bz'(\bl') = \bl',\,\; \sigma'(\bl') = \bl'\}$, where $\bz'$ is the image of $\bz$ under the isomorphism $W(G, S) \cong W(G', S')$. 
 
 Let us prove (a). Note that $W(G^*, S^*) \cong W(G'^*, S'^*)$ and $W(M^*, S^*) \cong W(M'^*, S'^*)$ and these isomorphisms are $\sigma^*$-equivariant since $W(G, S)\cong W(G', S')$, $W(M, S) \cong W(M', S')$ and these isomorphisms are $\sigma$-equivariant. Hence (a) is proved.
 
 Let us prove (b). We have $\Omega_{M^*} = \Omega_M$ by Lemma \ref{MM*}. Further, $\Omega_{\brM} \cong \Omega_{\brM'}$ and this isomorphism is $\sigma$-equivariant by \cite[Lemma 3.4]{Gan18}. Hence $\Omega_M \cong \Omega_\brM^\sigma \cong \Omega_{\brM'}^{\sigma'}\cong \Omega_{M'}$ and hence (b) holds.

(c) holds by (a), (b) and Theorem \ref{CD}. 
\end{proof}
The alcove $\bcC^{\sigma^*}$ together with the choice of special vertex $v$ determines a set of simple roots $\Delta_0$ of $\Phi(G^*, A^*)$. 
Note that $X_*(A^*) \otimes \bbR \cong \Omega_{M^*} \otimes \bbR$ and the natural $\bbZ$-bilinear pairing $\langle \cdot, \cdot \rangle: X^*(A^*) \times X_*(A^*) \rightarrow \bbZ$ extends to an $\bbR$-bilinear pairing 
 \begin{align}\label{pairing}
\langle \cdot, \cdot \rangle: (X^*(A^*) \otimes \bbR) \times (\Omega_{M^*} \otimes \bbR) \rightarrow \bbR.
 \end{align}  
 Let $\Omega_{M^*, +}$ be the set of dominant elements of $\Omega_{M^*}$ (see \cite[Section 5.2]{Ros}), that is, $\Omega_{M^*, +} = \{\tau \in \Omega_{M^*}\;|\; \langle a, \tau\rangle \geq 0 \; \forall \; a \in \Delta_0\}$. Let $\Phi^+(G^*, A^*)$ denote the set of positive roots in $\Phi(G^*, A^*)$. 
 
 Let $dg$ be a Haar measure on $G^*(F)$ so that $vol(K_m^*, dg)=1$. 
\begin{proposition}\label{simpleconv} Let $\tp: \Omega_{M^*} \rightarrow M^*(F), \tau \rightarrow n_\tau$, be the section of the Kottwitz homomorphism $\kappa_{M^*,F}$ in Proposition \ref{GTS}. 
\begin{enumerate}
\item For $\tau_1,\tau_2 \in \Omega_{M^*, +}$, we have, $\mathbbm{1}_{K_m^*n_{\tau_1} K_m^*} * \mathbbm{1}_{K_m^*n_{\tau_2} K_m^*} = \mathbbm{1}_{K_m^*n_{\tau_1} n_{\tau_2}K_m^*}$,
\item For $\tau \in \Omega_{M^*, +}$ and for $k_1,k_2 \in K^*$,  we have $\mathbbm{1}_{K_m^*k_1n_\tau k_2 K_m^*} = \mathbbm{1}_{K_m^*k_1 K_m^*}*\mathbbm{1}_{K_m^*n_\tau K_m^*}*\mathbbm{1}_{K_m^*k_2 K_m^*}$. 
\end{enumerate}
\end{proposition}
\begin{proof} Let us calculate $\vol(K_m^* n_\tau K_m^*; dg)$ for $\tau \in \Omega_{M^*, +}$. Recall that $K^* = K_v^*$ where $v$ is a special vertex. 
First, $\vol(K_{v,m}^*n_\tau K_{v,m}^*; dg) = [K_{v,m}^*: K_{v,m}^* \cap n_\tau K_{v,m}^*n_\tau^{-1}]$. 
The Iwahori factorization of $K_{v,m}^*$ gives a set-theoretic bijection 
\begin{align}
K_{v,m}^* \rightarrow \prod_{a \in \Phi^{\red, +}(G^*,A^*)} U_a^*(F)_{v,m} \times T_m^* \times \prod_{a \in \Phi^{\red, -}(G^*,A^*)}U_a^*(F)_{v,m}.
\end{align}
 
Then
\begin{align*}
n_\tau K_{v,m}^* n_\tau^{-1} &\rightarrow \prod_{a \in \Phi^{\red, +}(G^*,A^*)} U_a^*(F)_{v,m+\langle a, \tau \rangle } \times T_m^* \times \prod_{a \in \Phi^{\red, -}(G^*,A^*)}U_a^*(F)_{v,m+\langle a, \tau \rangle}
\end{align*}
This yields a set theoretic bijection
\[K_{v,m}^*/n_\tau K_{v,m}^* n_\tau^{-1} \cap K_{v,m}^* \rightarrow \prod_{a \in \Phi^{\red}(G^*,A^*)  }U_a^*(F)_{ v,m}/ U_a^*(F)_{v,m} \cap n_\tau K_{v,m}^* n_\tau^{-1}.\]
Hence
\begin{align*}
[K_{v,m}^*: K_{v,m}^* \cap n_\tau K_{v,m}^*n_\tau^{-1}] &=  \prod_{a \in \Phi^{\red}(G^*,A^*)} [U_a^*(F)_{ v,m}: U_a^*(F)_{v,m} \cap n_\tau K_{v,m}^* n_\tau^{-1}]\\
& =  \prod_{a \in \Phi^{\red}(G^*,A^*)} [U_a^*(F)_{v,m}:U_a^*(F)_{ v,m} \cap U_a^*(F)_{v,m+\langle a, \tau \rangle }]
\end{align*}
Since $\tau \in \Omega_{M^*, +}$, we have $\langle a, \tau \rangle \geq 0$ for each $a \in \Phi^+(G^*, A^*)$. Hence 
\begin{align*}
[K_{v,m}^*: K_{v,m}^* \cap n_\tau K_{v,m}^*n_\tau^{-1}] & =  \prod_{a \in \Phi^{\red, +}(G^*,A^*)} [U_a^*(F)_{v,m}:U_a^*(F)_{v,m+\langle a, \tau \rangle }]
\end{align*}
Let $a \in \Phi^{\red, +}(G^*,A^*)$. Then 
\begin{align*}
U_a^*(F)_{v,m}/ U_a^*(F)_{v,m+\langle a, \tau\rangle} &\cong U_a^*(\Fu)_{v,m}^{\sigma^*} / ( U_a^*(\Fu)_{v,m+\langle a, \tau \rangle})^{\sigma^*} \\
& \cong \left(U_a^*(\Fu)_{v,m} / U_a^*(\Fu)_{v,m+\langle a, \tau \rangle}\right)^{\sigma^*}.
\end{align*}
The last isomorphism is a consequence of \cite[Lemma 5.1.17]{BT2}. Let $\breve \Phi =\breve \Phi(G^*_\Fu, S^*_\Fu)$. Recall that $G^*_\Fu$ is quasi-split. Let 
\[\Phi^a = \{\bb \in \breve\Phi\;|\; \bb|_{A^*} = a \text{ or } 2a\}.\] 
Then
\[U_a^*(\Fu)_{v,m} = \prod_{\bb \in \Phi^a, \;\bb|_{A^*} = a} U_\bb^*(\Fu)_{v,m} \cdot \prod_{\bb \in \Phi^a \cap \breve\Phi^{\red}, \; \bb|_{A^*} = 2a} U_\bb^*(\Fu)_{v,2m} \]
and
\begin{align*}
U_a^*(\Fu)_{v,m}/U_a^*(\Fu)_{v,m+\langle a, \tau \rangle} = &\prod_{\bb \in \Phi^a, \; \bb|_A = a} \left(U_\bb^*(\Fu)_{v,m}/U_\bb^*(\Fu)_{v,m+\langle a, \tau \rangle}\right) \\&\cdot \prod_{\bb \in \Phi^a \cap \breve\Phi^{\red}, \; \bb|_A = 2a} \left(U_\bb^*(\Fu)_{v,2m}/ U_\bb^*(\Fu)_{v,2m+2\langle a, \tau\rangle}\right) \end{align*}

Let us first deal with the case where $2\bb$ is not a root. Then $U_\bb^* \cong Res_{ \bF_\bb/\Fu} \bbG_a$ where $\bF_\bb$ is the splitting extension of the root $\bb$.

We claim that  $\langle a, \tau \rangle e_\bb \in \bbZ$. To see this, note that with $\Sigma$ denoting the \'echelonnage root system attached to $\Phi(G^*, A^*)$, we have $e_{\bb} a \in \Sigma$ (see  \cite[Section 6.2.23]{BT1} and \cite[Section 4.2.21]{BT2}). Further, by \cite[Section 3.2]{Ros}, we have $\Sigma \subset \bar \Omega_{M^*}^\vee$ and hence  $\langle a, \tau \rangle e_\bb \in \bbZ$.
We have \[U_\bb^*(\bF)_{v,m}/U_\bb^*(\Fu)_{v, m+\langle a,\tau\rangle}  \cong\fO_{\bF_\bb}/ \fp_{\bF_\bb}^{\langle a,\tau \rangle e_\bb}.\] 

For $c \in \Phi(G^*, A^*)$, Let $k_c$ denote the cardinality of the $\sigma^*$-orbit of any root $\bc \in  \breve\Phi(G_\bF^*,S_\bF^*)$ whose restriction to $A^*$ is $c$. 

If $2a$ is not a root, $\Phi^a$ is a single orbit under $\langle \sigma^* \rangle$ and hence,  with $q_a = \# \left(\fO_{\bF_\bb}/ \fp_{\bF_\bb}\right)^{(\sigma^*)^{k_a}}$, we have
\[\#\left(U_a^*(F)_{m}/ U_a^*(F)_{m+\langle a,\tau\rangle} \right)=
q_a^{\langle a,\tau \rangle e_\bb}.\] Similarly, if $2a$ is a root, we have for any $\bb, \bb' \in \Phi^a$ with $\bb|_{A^*} = \bb'|_{A^*} = a$ and with $\bb+\bb'$ a root, 

\[\#\left(U_a^*(F)_{m}/U_a^*(F)_{m+\langle a, \tau\rangle} \right) =
q_a^{\langle a,\tau \rangle e_\bb} \cdot q_{2a}^{\langle 2a,\tau \rangle e_{\bb+\bb'} }. \] 

If $2\bb$ is a root, then there does not exist a reduced root in $\Phi^a$ whose restriction to $A^*$ is $2a$. In this case,  we have 
\[U_a^*(F)_{m}/U_a^*(F)_{m+\langle a,\tau \rangle} \cong \left(U_\bb^*(\bF)_{v,m}/U_\bb^*(\Fu)_{v, m+\langle a,\tau \rangle}\right)^{(\sigma^*)^{k_a}}.\]
Hence
\[\#\left(U_a^*(F)_{m}/U_a^*(F)_{m}\cap U_a^*(F)_{m+\langle a, \tau\rangle} \right) =
q_a^{\langle a,\tau \rangle e_\bb} \cdot q_{2a}^{\langle 2a,\tau \rangle e_{2\bb} }. \] 

Set $e_a = e_\bb$ for any $\bb \in \Phi^a$ with $\bb|_{A^*} = a$. If $2a$ is a root,  set $e_{2a} = e_{\bb+\bb'}$ if $2\bb$ is not a root and let $e_{2a} = e_{2\bb}$ if $2\bb$ is a root. For $a\in \Phi(G^*, A^*)$ note that $e_a$ is the ramification index of the splitting extension of the root $a$ (or the root subgroup $U_a^*$). Then  we have proved that
\begin{align}\label{volcomp}
\vol(K_m^* n_\tau K_m^*; dg) = \prod_{a \in \Phi^{red,+}(G^*,A^*), 2a \notin \Phi(G^*,A^*)} q_{a}^ {\langle a, \tau \rangle e_a} \prod_{a \in \Phi^{red,+}(G^*,A^*), 2a \in \Phi(G^*,A^*)} q_{a}^ {\langle a, \tau\rangle e_a} q_{{2a}}^{\langle 2a, \tau \rangle e_{2a}}
\end{align}

Hence, we have for $\tau_1, \tau_2 \in \Omega_{M^*, +}$, 
\[\vol(K_m^* n_{\tau_1} K_m^*; dg)  \vol(K_m^* n_{\tau_2} K_m^*; dg)  = \vol(K_m^* n_{\tau_1} n_{\tau_2} K_m^*; dg).\]
 Hence parts (a) and (b) are both consequences of  of  \cite[Proposition 2.2]{How85}. 
\end{proof}

\begin{corollary}\label{generators} Choose a finite subset $\Lambda_0 \subset \Omega_{M^*, +}$ such that $\Lambda_0$ contains 0 and generates $\Omega_{M^*, +}$ as a semigroup. Fix a set of representatives $S_{K^*}$ of $K^*/K_m^*$ in $K^*$. The set $\{\mathbbm{1}_{K_m^* n_\tau K_m^*}\;|\; \tau \in \Lambda_0\}\cup \{\mathbbm{1}_{K_m^* k K_m^*}\;|\; k \in S_{K^*}\}$ generates the algebra $\cH(G^*(F), K_m^*)$. 
\end{corollary}
\begin{proof} Recall that the Hecke algebra $\cH(G^*(F),K_m^*)$ is generated as a $\bbC$-vector space by \[\{\mathbbm{1}_{K_m^*k_1 n_\tau k_2^{-1} K_m^*}\;|\;  k_1, k_2 \in S_{K^*}, \tau \in \Omega_{M^*, +}\}.\] Choose $\tau_i \in \Lambda_0$ so that $\tau  = \sum_i \tau_i$. Then $n_\tau  = m \prod_i n_{\tau_i}$ for some $m \in M^*(F)_1 \subset K^*$. Now the corollary follows from the previous proposition. 
\end{proof}

For $\tau \in \Omega_{M^*, +}$, let $G^*_\tau(F) = K^*n_\tau K^*$. This set is a homogeneous space under $K^* \times K^*$ under the action $(k_1,k_2)\cdot g = k_1 g k_2^{-1}$. Let $X$ denote the discrete set of $K_m^*$-double cosets $K_m^* \backslash G^*(F) / K_m^*$ and let $X_\tau \subset X$ denote the set of $K_m^*$- double cosets in $G_\tau^*(F)$. Then $X_\tau$ is a homogeneous space of the finite group $K^*/K_m^* \times K^*/K_m^*$. Let $\Gamma_\tau \subset K^*/K_m^* \times K^*/K_m^*$ be the stabilizer of $K_m^* n_\tau K_m^*$. 

Let $F'$ be another non-archimedean local field that is $e$-close to $F$. Let $\tp':\Omega_{M'^*} \rightarrow M'^*(F'), \tau' \rightarrow n_{\tau'}$ be the section of the Kottwitz homomorphism contructed in Proposition \ref{GTS}. Then, under the isomorphism in Proposition \ref{MmCLF}, we have $n_\tau\mod M_m^* \rightarrow n_{\tau'} \mod M_m'^*$. 

By Proposition \ref{CWG}, we know that $W(G^*,A^*) \backslash \Omega_{M^*} \cong W(G'^*,A'^*) \backslash \Omega_{M'^*}$. Recall that $\Omega_{M^*, +}$ is the set of dominant elements of $\Omega_{M^*}$. Note that $\Omega_{M^*, +}$ contains 0. Under the isomorphism $\Omega_{M^*} \cong \Omega_{M'^*}$, $\Omega_{M^*,+}$ maps to $\Omega_{M'^*, +}$. Consider the isomorphism $p_{m}^*: K^*/K_m^* \times K^*/K_m^* \rightarrow K'^*/K_m'^* \times K'^*/K_m'^*$ induced by Theorem \ref{PGCLFI}. Then for each $\tau \in \Omega_{M^*, +}$, it is clear that $p_{m}^*(\Gamma_\tau) = \Gamma_{\tau'}$. This allows us to construct a bijection $X \rightarrow X'$ and hence an isomorphism of $\bbC$-linear spaces
\[h_{m}^*: \cH(G^*(F), K_m^*) \rightarrow \cH(G'^*(F'), K_m'^*).\]
Our goal is to prove that there exists an $l\geq e$ such that for any $F'$ that is $l$-close to $F$, the above map is an algebra isomorphism.

\begin{lemma}\label{nC} Let $\Lambda \subset \Omega_{M^*, +}$ be a finite subset and let $G_\Lambda^*(F) = \cup_{\tau \in \Lambda} G_\tau^*(F)$. 
\begin{enumerate}
\item There exists a natural number $N = N_\Lambda \geq m$ such that for all $g \in G_\Lambda^*(F)$, $gK_N^*g^{-1} \subset K_m^*$.
\item Choose $l\geq \max(N, e)$ large enough so that for any $F'$ that is $l$-close to $F$, Theorem \ref{PGCLFI} yields an isomorphism $p_N^*:K^*/K_N^* \rightarrow K'^*/K_N'^*$. Then for each $h_1,h_2 \in \cH(G^*(F), K_m^*)$ supported on $G_\Lambda^*(F)$, we have
\[ h_{m}^*(h_1 *h_2) = h_{m}^*(h_1) * h_{m}^*(h_2).\]
\end{enumerate}
\end{lemma}
\begin{proof} We will prove that for each $\tau \in \Lambda$, there exists $N_\tau \geq m$ such that  $n_\tau K_{N_\tau}^*n_\tau^{-1} \subset K_m^*$, since then, (a) would hold for $N_\Lambda = \max\{N_\tau\;|\; \tau \in \Lambda\}$. Now, fix $\tau \in \Lambda$. Let $N_\tau$ be large enough so that $N_\tau +\langle a, \tau \rangle \geq m$ for each $a \in \Phi(G^*, A^*)$. Now, $K_{ N_\tau}^* = \langle T_{N_\tau}^*, U_a^*(F)_{v, N_\tau}\;|\; a \in \Phi(G^*,A^*) \rangle$. Then $n_\tau K_{N_\tau}^* n_\tau^{-1}= \langle T_{N_\tau}^*, n_\tau U_a(F)_{v, N_\tau}^*n_\tau^{-1}\; |\; a \in \Phi(G^*, A^*) \rangle = \langle T_{N_\tau}^*,  U_a(F)^*_{v, N_\tau +\langle a, \tau \rangle}\;|\; a \in \Phi(G^*, A^*) \rangle$. By the choice of $N_\tau$, it follows that $n_\tau K_{N_\tau}^* n_\tau^{-1}\subset K_m^*$. This finishes the proof of (a).

 Write $h_i= \mathbbm{1}_{K_m^*g_i K_m^*}, i=1,2$ for $g_i \in G_\Lambda^*(F)$. Note that $h_1*h_2(x) = \vol(K_m^*g_1K_m^* \cap K_m^* g_2K_m^* x, dg)$. Then (a) implies that $K_m^*g_1K_m^* \cap K_m^* g_2K_m^* x$ is $K_{N}^*$-bi-invariant. 
 Now,
 \[h_1*h_2 = \sum_{\tau \in \Omega_{M^*, +}} \sum_{K_m k_1 n_\tau k_2^{-1} K_m \in X_\tau} \vol(K_m^*g_1K_m^* \cap K_m^* g_2K_m^* k_1n_\tau k_2^{-1}, dg) \mathbbm{1}_{K_N^* k_1n_\tau k_2^{-1}K_N^*}\]
 The calculation in Proposition \ref{simpleconv} implies that if $F$ and $F'$ are $l$-close, then $$\vol(K_m^*g_1K_m^* \cap K_m^* g_2K_m^* k_1n_\tau k_2^{-1}, dg) = \vol(K_m'^*g_1'K_m^* \cap K_m'^* g_2'K_m^* k_1'n_\tau' k_2'^{-1}, dg')$$ where $p_{N}^*(k_1 \mod K_N^*, k_2 \mod K_N^*) = (k_1'\mod K_N'^*, k_2'\mod K_N'^*)$. Further, \[h_{N}^*(\mathbbm{1}_{K_N^* k_1n_\tau k_2^{-1}K_N^*}) = \mathbbm{1}_{K_N'^* k_1'n_\tau' k_2'^{-1}K_N'^*}.\] This implies that $h_{N}^*(h_1 * h_2) = h_{N}^*(h_1) * h_{N}^*(h_2)$. Since $h_{N}^*$ agrees with $h_{m}^*$ on $\cH(G^*(F), K_m^*)$, this finishes the proof of (b). 
 \end{proof}

\subsection{Proof of Theorem \ref{MainTheorem}} With the above ingredients in place, the proof of Theorem \ref{MainTheorem} is identical to that of Kazhdan \cite{kaz86}. We write it down for completeness.
We know by \cite[Theorem 2.13 and Corollary 3.4]{Ber84} that the Hecke algebra $\cH(G^*(F),K_m^*)$ is finitely presented. Let $x_1, x_2 \cdots x_p$ be a finite set of generators and let $R_1, R_2, \cdots R_q$ be a finite set of relations among these generators, that is, these are non-commutative polynomials in $p$ variables such that $R_i(x_1 \cdots x_p) = 0$ for $1 \leq i \leq q$. 

We index the elements $\{\mathbbm{1}_{K_m^*gK_m^*}\;|\; g \in \Lambda_0 \cup S_{K^*}\}$ as $f_1, f_2 \cdots f_r$, and by Corollary \ref{generators}, the elements $f_1, f_2, \cdots f_r$ form a system of generators for $\cH(G^*(F), K_m^*)$. Let $G_i, 1 \leq i \leq p$ be polynomials in $r$ variables such that $G_i(f_1, f_2 \cdots f_r) = x_i,\; 1 \leq i \leq p$. Similarly, let   $F_i$ be polynomials in $p$ variables such that $F_i(x_1, x_2 \cdots x_p) = f_i,\; 1 \leq i \leq r$. Let $N_0$ be the maximal degree of the polynomials $R_i(G_1, G_2, \cdots G_p), 1\leq i \leq q$, and $F_i(G_1, G_2 \cdots, G_p), 1 \leq i \leq r$. Let $\Lambda \subset \Omega_{M^*, +}$ be a finite subset such that all possible products of $N_0$ terms of the $f_i$'s is contained in $G^*_\Lambda(F)$.  Choose $l$ as in Lemma \ref{nC}(b). Suppose $F$ and $F'$ are $l$-close. Let $A:= \bbC (x_1, \cdots x_p 
)$. We have an algebra homomorphism $\fe: A \rightarrow \cH(G'^*(F), K_m'^*), x_i \rightarrow G_i(f_1', f_2' \cdots f_r')$, where $f_i' = h_m^*(f_i)$.  It follows from Lemma \ref{nC}(b) that 
\[\fe(R_j(x_1, x_2, \cdots x_p))=0\; \forall j=1, 2 \cdots q.\]
Hence we obtain an algebra homomorphism $$\bar \fe: \cH(G^*(F), K_m^*) \rightarrow \cH(G'^*(F'),K_m'^*).$$  By Lemma \ref{nC}(b) again, we have $\bar\fe(f_i) = h_{m}^*(f_i), \; 1 \leq i\leq r$. Recall that $X$ is the set of discrete double cosets $K_m^*\backslash G^*(F)/K_m^*$ and the characteristic functions of elements of $X$ gives a $\bbC$-basis of $\cH(G^*(F),K_m^*)$. By Proposition \ref{simpleconv}, $\bar\fe = h_{m}^*$ on these characteristic functions. Hence $h_{m}^*$ is an algebra isomorphism.

\bibliographystyle{amsalpha}

\end{document}